\newtheorem{theorem}{Theorem}[section]
\newtheorem{proposition}[theorem]{Proposition}
\newtheorem{lemma}[theorem]{Lemma}
\newtheorem{problem}[theorem]{Problem}
\numberwithin{equation}{section}
\newcommand\ignore[1]{}
\newcommand{\Ex}{\mathbb{E}} 
\newcommand{\Prp}[2]{\mathbb{P}_{#1}\left(#2\right)} 
\newcommand{\Exp}[2]{\mathbb{E}_{#1}\left[#2\right]} 
\newcommand{\Ind}[1]{\textbf{1}_{#1}} 
\def\eps{\varepsilon}
\DeclareMathOperator{\Errop}{Err}
\newcommand{\Err}[2]{\Errop_{#1}(#2)}
\def\Tmix{{ t}_{\rm mix}}
\def\Thit{{T}}
\def\Tprod{{t}_{\rm prod}}
\def\Tces{{t}_{\rm Ces}}
\begin{document}
\title[Set hitting times in Markov chains]{Tight inequalities among set hitting times in {M}arkov chains}

\author[S. Griffiths]{Simon Griffiths}
\address{Instituto Nacional de Matem\'atica Pura e Aplicada (IMPA), Rio de Janeiro, Brazil}
\email{sgriff@impa.br}
\thanks{SG is supported by CNPq Proc.~500016/2010-2.}

\author[R. J. Kang]{Ross J. Kang}
\address{Centrum Wiskunde \& Informatica, Amsterdam, Netherlands}
\email{ross.kang@gmail.com}
\thanks{This work was begun while RJK was at Durham University, supported by EPSRC grant EP/G066604/1.  He is currently supported by a NWO Veni grant.}

\author[R. I. Oliveira]{Roberto Imbuzeiro Oliveira}
\address{Instituto Nacional de Matem\'atica Pura e Aplicada (IMPA), Rio de Janeiro, Brazil}
\email{rimfo@impa.br}
\thanks{RIO is supported by a Bolsa de Produtividade em Pesquisa and a Universal grant from CNPq, Brazil.}

\author[V. Patel]{Viresh Patel}
\address{University of Birmingham, Birmingham, United Kingdom}
\email{viresh.s.patel@gmail.com}
\thanks{This work was begun while VP was at Durham University, supported by EPSRC grant EP/G066604/1. He is currently supported by EPSRC grant EP/J008087/1.}


\subjclass[2010]{Primary 60J10}


\begin{abstract}
Given an irreducible discrete-time Markov chain on a finite state space, we consider the largest expected hitting time $T(\alpha)$ of a set of stationary measure at least $\alpha$ for $\alpha\in(0,1)$.
We obtain tight inequalities among the values of $T(\alpha)$ for different choices of $\alpha$.
One consequence is that $T(\alpha) \le T(1/2)/\alpha$ for all $\alpha < 1/2$.
As a corollary we have that, if the chain is lazy in a certain sense as well as reversible, then $T(1/2)$ is equivalent to the chain's mixing time, answering a question of Peres.
We furthermore demonstrate that the inequalities we establish give an almost everywhere pointwise limiting characterisation of possible hitting time functions $T(\alpha)$ over the domain $\alpha\in(0,1/2]$.
\end{abstract}

\keywords{Markov chains, hitting times.}

\maketitle

\section{Introduction}\label{sec:intro}

Hitting times are a classical topic in the theory of finite Markov chains, with connections to mixing times, cover times and electrical network representations~\cite{LPW,Lovasz_RWSurvey}. In this paper, we consider a natural family of extremal problems for maximum expected hitting times.  In contrast to most earlier work on hitting times that considered the maximum expected hitting times of individual states, we focus on hitting {\em sets} of states of at least a given stationary measure.  Informally, we are interested in the following basic question: how much more difficult is it to hit a smaller set than a larger one?
(We note that other, quite different extremal problems about hitting times have been considered, e.g.~\cite{BW90}.)

Following the notation of Levin, Peres and Wilmer~\cite{LPW}, we let a sequence of random v.ariables $X=(X_t)_{t=0}^{\infty}$ denote an irreducible Markov chain with finite state space $\Omega$, transition matrix $P$, and stationary distribution $\pi$.  We denote by $\mu_0$ some initial distribution of the chain and by $\mathbb{P}_{\mu_0}$ the corresponding law.  In the case that $\mu_0 =x$ almost surely, for some $x\in \Omega$, we write $\mathbb{P}_{x}$ for the corresponding law.

Given a subset $A\subseteq \Omega$, the \emph{hitting time} of $A$ is the random variable $\tau_A$ defined as follows:
\[
\tau_A\, \equiv\, \min\{\, t\,:\,X_t\in A\, \}\, .
\]
We shall take particular interest in the maximum expected hitting times of sets of at least a given size.  For $\alpha\in (0,1)$ we define $T(\alpha)=T^P(\alpha)$ as follows:
\[
T(\alpha)\, \equiv\, \max\{ \Ex_{x}[\tau_A]:x\in \Omega, A\subseteq \Omega, \pi(A)\ge \alpha\, \}\, .
\]
In other words, $T(\alpha)=T^P (\alpha)$ is the maximum, over all starting states $X_0=x\in \Omega$ and all sets $A\subseteq \Omega$ of stationary measure at least $\alpha$, of the expected hitting time of $A$ from $x$.

\subsection{The extremal ratio problem}

Note the obvious fact that, given $0 < \alpha<\beta<1$, $T(\alpha)$ is lower bounded by $T(\beta)$ always.  Informally in other words, it is more difficult to hit smaller subsets of the state space.  A natural problem then is to determine how much more difficult this is, i.e.~how large the ratio between $T(\alpha)$ and $T(\beta)$ can become.  We dub this the {\em extremal ratio problem}.

\begin{problem}
Given $0<\alpha<\beta<1$, what is the largest possible value of $T(\alpha)/T(\beta)$ over all irreducible finite Markov chains (on at least two states)?
\end{problem}

\noindent
A first result on this problem was noted by the third author~\cite[Corollary~1.7]{RIO}.

\begin{theorem}\label{thm:RIO}
Fix $0 < \alpha < \beta < 1/2$.  There exists a constant $C_\beta > 0$ such that the following holds.
For any irreducible finite Markov chain, 
\begin{align*}T(\alpha)\,\le\, C_\beta\cdot\frac{T(\beta)}{\alpha}.\end{align*}
\end{theorem}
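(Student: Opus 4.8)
The plan is to prove the sharper statement that for a suitable constant $c_\beta>0$, from \emph{every} starting state the chain hits a given set $A$ with $\pi(A)\ge\alpha$ within $c_\beta T(\beta)$ steps with probability at least $\alpha/c_\beta$, and then to conclude by a restart argument: split time into consecutive blocks of length $L:=c_\beta T(\beta)$; by the strong Markov property each block, wherever it begins, hits $A$ with probability $\ge\alpha/c_\beta$, so $\tau_A$ is stochastically dominated by $L$ times a geometric variable of mean $c_\beta/\alpha$, whence $\mathbb{E}_x[\tau_A]\le L\cdot c_\beta/\alpha=c_\beta^2\,T(\beta)/\alpha$. As a sanity check that the exponent of $\alpha$ is correct: the elementary inequality $\mathbb{E}_x[\tau_A]\le\frac1{\pi(A)}\sum_{a\in A}\pi(a)\mathbb{E}_x[\tau_a]$ (a set is no harder to reach than a $\pi$-random one of its points, since $\tau_A\le\tau_a$ for each $a\in A$), together with the random target lemma (so that $\sum_a\pi(a)\mathbb{E}_x[\tau_a]$ is independent of $x$), already gives $T(\alpha)\le(\text{Kemeny constant})/\alpha$; the content of the theorem is to replace the Kemeny constant, which need not be $O_\beta(T(\beta))$, by something controlled by $T(\beta)$.

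To establish the main claim I would fix $A$ with $\pi(A)\ge\alpha$ and enlarge it to $B\supseteq A$ with $\pi(B)\ge\beta$ (possible since $\beta<1$), so that $\pi_B(A)=\pi(A)/\pi(B)\ge\alpha$. From any $y$ the chain reaches $B$ in expected time $\le T(\beta)$, hence within $2T(\beta)$ with probability $\ge1/2$; let $Y$ be its entrance state. Consider the chain induced on $B$ (the trace chain), whose stationary distribution is $\pi_B$: each of its steps costs the original chain expected time at most $1+T(\beta)$, since $\mathbb{E}_b[\tau_B^+]=1+\sum_{z\notin B}P(b,z)\mathbb{E}_z[\tau_B]\le 1+T(\beta)$ for every $b\in B$. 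It therefore suffices to show that the trace chain, started from $Y$, hits $A$ within $O_\beta(1/\alpha)$ of its own steps with probability bounded below; combined with the preceding sentence this yields $\mathbb{E}_x[\tau_A]\le 2T(\beta)+O_\beta(1/\alpha)(1+T(\beta))=O_\beta(T(\beta)/\alpha)$.

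The heart of the matter — and the main obstacle — is precisely this last step, since a priori the trace chain on $B$ can have hitting times as badly behaved as the original chain, so one cannot just invoke ``$\bar\tau_A$ is $O(1/\pi_B(A))$''. I see two routes. The first is to not take $B$ arbitrary but to choose it (in terms of $A$, and if necessary $y$) so that the law of the entrance state and of the subsequent returns to $B$ is genuinely spread over $B$ relative to $\pi_B$ — for instance by averaging the argument over many candidate enlargements $B$, or by placing into $B\setminus A$ exactly the states from which $A$ is hit quickly, so that the ``bad part'' of $B$ is negligible in a hitting-time sense; one nuisance is that these measure-splitting and ``enlarge up to measure $\beta$'' steps are sensitive to $\pi$-heavy states, which should be handled separately. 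The second is to route through a periodicity-robust mixing time of (the trace chain on) $B$: since $\tfrac1t\sum_{s<t}P^s(y,A)\le\mathbb{P}_y(\tau_A<t)$, it is enough to bound the Cesàro mixing time $\Tces$ of that trace chain by $O_\beta(T(\beta)/\alpha)$, which sidesteps periodicity, where ordinary mixing times are infinite although $T(\beta)$ is finite. Either way the residual task is to quantify how quickly the chain ``spreads out over'' a set of stationary measure $\ge\beta$ purely in terms of $T(\beta)$; the restart bookkeeping and the treatment of atoms are routine by comparison.
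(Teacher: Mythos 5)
Your proposal has a genuine gap, and you flag it yourself: everything hinges on showing that the trace chain on $B$, started from the entrance state, hits $A$ within $O_\beta(1/\alpha)$ of its own steps with probability bounded below, and neither of the two routes you float afterwards is carried out. As you observe, for an arbitrary enlargement $B\supseteq A$ the trace chain's hitting times can be exactly as badly behaved as the original chain's, so the passage to $B$ buys nothing by itself; and your second route (bounding the C\`esaro mixing time of the trace chain) amounts to re-deriving the hard half of the equivalence between $T(\beta)$ and $\Tces^P$, which is not an elementary step. Indeed the paper does not reprove that equivalence either: Theorem~\ref{thm:RIO} is obtained there simply by chaining $T(\alpha)\le \Tprod^P/\alpha \le C\,\Tces^P/\alpha \le C\,c(\beta)\,T(\beta)/\alpha$, citing the C\`esaro form of Theorem~\ref{thm:aldous} for the middle inequality and Theorem~\ref{thm:largeish} (Oliveira; Peres--Sousi) for the last. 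Your restart and trace-chain bookkeeping are correct, but all of the mathematical content sits in the step you leave open, and that step is essentially the cited theorem.

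Your first route, however --- ``placing into $B\setminus A$ exactly the states from which $A$ is hit quickly, so that the bad part of $B$ is negligible'' --- is the right instinct, and it is precisely how the paper proves the strictly stronger Theorem~\ref{thm:main} (with the optimal constant $C_\beta=1$ and $\beta=1/2$ allowed) with no mixing-time machinery at all. The quantitative tool you are missing is Proposition~\ref{prop:main}, the inequality $\pi(A)\le d^+(A,C)/\bigl(d^+(A,C)+d^-(C,A)\bigr)$. Applying it to $C=\{y\in\Omega:\Exp{y}{\tau_A}>(\alpha^{-1}-1)\,T(1-\beta)\}$ forces $\pi(C)<1-\beta$, so $B=\Omega\setminus C$ has $\pi(B)\ge\beta$, every state of $B$ reaches $A$ in expected time at most $(\alpha^{-1}-1)T(1-\beta)$, and routing $x\to B\to A$ gives the bound immediately --- no trace chain, no restart, no mixing time. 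I would try to finish your first route with that proposition rather than chasing a C\`esaro bound for the induced chain.
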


\noindent
This can be shown via C\`esaro mixing time, specifically as a consequence of an equivalence between $T(\beta)$ for $\beta\in(0,1/2)$ and C\`esaro mixing time for any irreducible chain.  This equivalence, which was recently proved independently by the third author~\cite{RIO} and by Peres and Sousi~\cite{PS}, we will discuss in more detail in Subsection~\ref{sec:mixintro}.

In this paper, we improve upon the above result significantly, without recourse to any results on mixing time.  Our first main result implies that the optimal constant in Theorem~\ref{thm:RIO} is $C_\beta=1$ and that moreover we can include the case $\beta=1/2$. 

\begin{theorem}\label{thm:main}
Fix $0 < \alpha < \beta \le 1/2$.  For any irreducible finite Markov chain, 
\begin{equation}\label{star}
T(\alpha)\, \le\, T(\beta)\, + \, \left(\frac{1}{\alpha}-1\right)\cdot T(1-\beta)\, \leq \, \frac{T(\beta)}{\alpha}.  \tag{$\star$}
\end{equation}
\end{theorem}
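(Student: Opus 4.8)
The plan is to dispatch the second inequality in one line and to prove the first by a renewal (``restart'') decomposition of $\tau_A$.

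\emph{The second inequality.} Since $\beta\le 1/2$ we have $1-\beta\ge\beta$, and $T$ is non-increasing in its argument, so $T(1-\beta)\le T(\beta)$; as $\tfrac1\alpha-1\ge 0$ (because $\alpha<1$), $T(\beta)+\big(\tfrac1\alpha-1\big)T(1-\beta)\le T(\beta)+\big(\tfrac1\alpha-1\big)T(\beta)=T(\beta)/\alpha$. So all the content lies in the first inequality.

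\emph{Setup for the first inequality.} Fix a set $A$ with $\pi(A)\ge\alpha$ and a state $x$; we bound $\Ex_x[\tau_A]$. If $\pi(A)\ge\beta$ then $\Ex_x[\tau_A]\le T(\beta)$ and there is nothing to prove, so assume $\pi(A)<\beta$; in particular $\pi(A)<1/2$, whence $\pi(\Omega\setminus A)>1-\beta$. Choose a set $B$ with $A\subseteq B$ and $\pi(B)\ge\beta$ (so $\Ex_y[\tau_B]\le T(\beta)$ for every $y$) together with a set $C$ of stationary measure at least $1-\beta$, disjoint from $A$, to serve as a ``bulk'' region to fall back on (so $\Ex_y[\tau_C]\le T(1-\beta)$ for every $y$). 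First run from $x$ until it hits $B$: this costs at most $T(\beta)$ in expectation, and if it lands in $A$ we are done. Otherwise one performs a sequence of \emph{excursions}, each of which lets the chain wander out through the bulk region $C$ and back and costs $O(T(1-\beta))$ in expectation, until one of them delivers the chain into $A$. By the strong Markov property this gives
\[
\Ex_x[\tau_A]\;\le\; T(\beta)\;+\;\Ex_x[\,\#\text{failed excursions}\,]\cdot O(T(1-\beta)),
\]
so it suffices to show that the expected number of failed excursions is at most $\tfrac1\alpha-1$, with the implied constant equal to $1$, i.e.\ with each excursion genuinely costing at most $T(1-\beta)$.

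\emph{The crux.} The bound $\tfrac1\alpha-1$ on the number of excursions is where the hypothesis $\pi(A)\ge\alpha$ is used, and I expect it to be the main obstacle. The governing fact should be Kac's formula: the mean return time to $A$ started from $\pi$ conditioned on $A$ equals $1/\pi(A)\le 1/\alpha$, so in a \emph{stationary} run the chain makes at most $\tfrac1\alpha-1$ excursions between two consecutive visits to $A$; quantitatively, one would apply the cycle form of Kac's identity to the chain induced on a set $D$ of measure about $1-\beta$ containing $A$, taking the cycle to be an excursion from $A$ and the weight function to be $y\mapsto\Ex_y[\tau_D^+]$, which reproduces the factor $1/\pi(A)$ exactly and pins the per-excursion cost to $T(1-\beta)$ (the unit return cost supplying the ``$-1$''). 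The delicate points that a full proof must handle are: (i) Kac's identity speaks about stationary runs, equivalently about runs started from $A$ itself, whereas here the excursions are entered from an arbitrary, possibly worst-case, starting state $x$, so one must compare the relevant entrance distributions and verify that the worst case is no worse than the stationary one — presumably via a careful analysis of the induced chain on $D$; and (ii) the various costs must be accumulated so that no spurious additive term survives and the bound closes up exactly to $T(\beta)+\big(\tfrac1\alpha-1\big)T(1-\beta)$. (A minor technical wrinkle is that arranging exact stationary measures may require splitting a state, which leaves all the quantities $T(\cdot)$ unchanged.) Surmounting (i) is the heart of the argument.
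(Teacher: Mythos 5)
You correctly identify the Kac-type cycle identity as the engine behind the factor $1/\alpha$, and the second inequality is handled exactly as in the paper. But your first-inequality argument has an acknowledged hole precisely at the point you call the crux: nothing in your renewal decomposition controls the entrance distribution of the excursions. Kac's identity compares \emph{stationary} averages, whereas the chain re-entering the bulk region after a failed excursion need not be anywhere near stationary, and the expected number of excursions started from a worst-case entry point could a priori exceed $1/\alpha-1$. You flag this and gesture at ``comparing entrance distributions via the induced chain on $D$'', but no such comparison is supplied, and it is not clear that one closes up without losing a constant. There is also a smaller vagueness in the accounting: what an excursion is (out to $C$ and then back to where?) is not pinned down, so it is not clear why each costs at most $T(1-\beta)$ rather than $T(1-\beta)$ plus a return leg.

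The paper sidesteps the entrance-distribution issue entirely by not making excursions. It defines $C=\{y:\Ex_y[\tau_A]>(\alpha^{-1}-1)T(1-\beta)\}$, the set of states from which $A$ is expensive, and proves $\pi(C)<1-\beta$ by contradiction: if $\pi(C)\ge 1-\beta$ then $d^+(A,C)\le T(1-\beta)$ while $d^-(C,A)>(\alpha^{-1}-1)T(1-\beta)$, forcing $\pi(A)<\alpha$ by Proposition~\ref{prop:main}. That proposition is where the Kac-type identity lives, and crucially it is applied with a carefully chosen stationary distribution $\mu$ of an auxiliary chain on $A$ (so the identity holds exactly, with no worst-case-versus-stationary slippage). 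With $B=\Omega\setminus C$ of measure $>\beta$, a single two-leg bound $\Ex_x[\tau_A]\le\Ex_x[\tau_B]+d^+(B,A)\le T(\beta)+(\alpha^{-1}-1)T(1-\beta)$ finishes. In short, where you try to propagate the Kac bound through an excursion count started from arbitrary states, the paper isolates it inside a clean extremal inequality and reduces the theorem to a monotonicity argument on the level sets of $y\mapsto\Ex_y[\tau_A]$. To repair your sketch, replace the arbitrary $B,C$ by these level sets and replace the excursion count by a single invocation of Proposition~\ref{prop:main}.
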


\noindent
This bound on $T(\alpha)$ is tight: for any $0 < \alpha < \beta \le 1/2$, there exists an irreducible finite Markov chain for which the three terms in~\eqref{star} are all equal.
Furthermore, $\beta=1/2$ represents a boundary case for Theorem~\ref{thm:main}: for each $\beta > 1/2$, there is a class of irreducible finite Markov chains such that $T(\alpha)/T(\beta)$ is arbitrarily large. Thus we have completely settled the extremal ratio problem.

As an application of Theorem~\ref{thm:main}, we show in Subsection~\ref{sec:mixintro} how mixing time is equivalent to $T(1/2)$ for any irreducible chain, under the added restriction that the chain is lazy in a certain sense as well as reversible; this resolves a problem posed by Peres~\cite{Peres_Open}.

Our strategy for proving Theorem~\ref{thm:main} relies on a simple, but useful proposition, which can be deduced from the ergodic properties of irreducible finite Markov chains. We require the following definitions.  Given two sets $A,B\subseteq \Omega$, we  define
\[
d^+(A,B)\equiv \max_{x\in A}\Exp{x}{\tau_B} 
\quad \text{ and } \quad
d^-(A,B)\equiv\min_{x\in A}\Exp{x}{\tau_B}\, .
\]

\begin{proposition}\label{prop:main} Given an irreducible Markov chain with finite state space $\Omega$ and stationary distribution $\pi$, let $A,C\subseteq \Omega$.  Then
\[
\pi(A)\, \le \, \frac{d^+(A,C)}{d^+(A,C)+d^-(C,A)}\, .
\]
\end{proposition}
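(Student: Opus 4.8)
The plan is to realise $\pi(A)$ as the long-run fraction of time the chain spends in $A$ and to bound this fraction via an excursion decomposition of the trajectory that alternates between $A$ and $C$. We may assume throughout that $A$ and $C$ are nonempty and disjoint: if they meet then $d^-(C,A)=0$, so the inequality either reads $\pi(A)\le1$ or has an undefined right-hand side, and the cases where one of the sets is empty are similarly trivial or vacuous. Under this assumption, irreducibility of a finite chain makes every expected hitting time appearing below finite, while $d^+(A,C)\ge1$ and $d^-(C,A)\ge1$ since hitting $A$ (resp.\ $C$) from $C$ (resp.\ $A$) takes at least one step.

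First I would fix the excursion times. Set $U_0=\min\{t\ge0:X_t\in A\}$, and recursively $V_k=\min\{t\ge U_k:X_t\in C\}$ and $U_{k+1}=\min\{t\ge V_k:X_t\in A\}$; since $A\cap C=\emptyset$ these are strictly increasing and almost surely finite. By the strong Markov property the watched sequence $Z_k:=X_{U_k}$ is a Markov chain taking values in $A$, and as such it has a stationary distribution $\nu$, necessarily supported on $A$. Running $X$ with $X_0\sim\nu$ then forces $U_0=0$ and $X_{U_1}=Z_1\sim\nu$, so that one ``cycle'' $[0,U_1)$ returns the law of the current state to $\nu$. The conceptual point here is to take $\nu$ rather than a fixed starting state, so that cycles form a stationary structure.

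The key step is the identity
\[
\pi(A)\;=\;\frac{\Exp{\nu}{M}}{\Exp{\nu}{U_1}},\qquad\text{where}\quad M:=\#\{\,0\le t<U_1:\ X_t\in A\,\}.
\]
This is an instance of the standard construction of a stationary measure from excursions: for any stopping time $S\ge1$ with $\Exp{\nu}{S}<\infty$ and $X_S\sim\nu$, the normalised occupation measure $y\mapsto\Exp{\nu}{\sum_{t=0}^{S-1}\Ind{X_t=y}}\big/\Exp{\nu}{S}$ is a stationary probability distribution, hence equals $\pi$ by uniqueness; applying this with $S=U_1$ and evaluating at $A$ gives the identity. I expect this to be the only genuinely technical point: one has to check $X_{U_1}\sim\nu$ and $\Exp{\nu}{U_1}<\infty$, both immediate from the construction and the finiteness of expected hitting times, while the stationarity of the occupation measure is a short telescoping computation that uses that $\{S\le t\}$ is $\mathcal{F}_t$-measurable.

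It remains to estimate the two expectations. During a cycle the chain lies in $A$ only on the sub-interval $[U_0,V_0)$, since $X_t\notin A$ for $V_0\le t<U_1$ by the definition of $U_1$; hence $M\le V_0-U_0=\tau_C$, and since $X_0\in A$ almost surely, $\Exp{\nu}{M}\le\Exp{\nu}{\tau_C}\le d^+(A,C)$. On the other hand $U_1-M\ge U_1-V_0$, the latter being the time to reach $A$ from $X_{V_0}\in C$, so $\Exp{\nu}{U_1}-\Exp{\nu}{M}\ge\Exp{\nu}{U_1-V_0}\ge d^-(C,A)$. Feeding these bounds into the identity,
\[
\pi(A)\;\le\;\frac{\Exp{\nu}{M}}{\Exp{\nu}{M}+d^-(C,A)},
\]
and since $u\mapsto u/(u+d^-(C,A))$ is increasing on $(0,\infty)$ and $0<\Exp{\nu}{M}\le d^+(A,C)$, the right-hand side is at most $d^+(A,C)/(d^+(A,C)+d^-(C,A))$, which is the claimed inequality.
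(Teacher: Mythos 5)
Your proof is correct and is essentially the same argument as the paper's (which is due to Peres and Sousi): both introduce the induced chain on $A$ obtained by watching $X$ at the successive return times to $A$ after visiting $C$ (your $Z_k=X_{U_k}$, the paper's chain with matrix $Q$), take a stationary distribution of that chain, and apply the cycle/occupation-measure identity (the paper's Lemma~\ref{lem:splitlemma}) to write $\pi(A)$ as a ratio of expectations over one excursion. The concluding estimates are also the same: bound the expected time in $A$ during a cycle by $d^+(A,C)$ and the remaining expected cycle length from $C$ back to $A$ by $d^-(C,A)$, then use monotonicity of $u\mapsto u/(u+v)$.
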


\noindent
Both Theorem~\ref{thm:main} and Proposition~\ref{prop:main} are proved in Section~\ref{sec:proofs}.  The examples mentioned after the statement of Theorem~\ref{thm:main} are presented in Section~\ref{sec:examples}. We remark that we have replaced our original proof of Proposition~\ref{prop:main} by a shorter and more elegant argument of Peres and Sousi~\cite{PSpersonal}.

\subsection{The shape problem}

In consideration of Theorem~\ref{thm:main}, it is natural to wonder what form the ratio $T(\alpha)/T(\beta)$ may possibly take.
The second problem we treat is what we call the {\em shape problem}.

\begin{problem}
What is the minimal set of constraints on the possible ``shape'' of the function $T(\alpha)$ over the domain $\alpha\in (0,1/2]$ over irreducible finite Markov chains (on at least two states)?
\end{problem}

\noindent
We show that, in the appropriate limit, the constraints imposed by~\eqref{star} in Theorem~\ref{thm:main} are the only non-trivial constraints on $T(\alpha)$ over the domain $\alpha\in (0,1/2]$. (The trivial constraint is that $T$ must be a decreasing function.) 

We now make this statement rigorous.
Let $\mathcal{F}$ denote the set of decreasing functions $f:(0,1/2]\to \mathbb{R}$ given by $f(\alpha) = T(\alpha)/T(1/2)$ for some irreducible finite Markov chain (on at least two states).  
We also consider limits of such functions.
Let $\mathcal{\overline{F}}$ denote the set of decreasing functions $f:(0,1/2]\to \mathbb{R}$ each of which may be obtained as the almost everywhere~(a.e.)~pointwise limit of functions in $\mathcal{F}$. 
Our second main result is as follows.

\begin{theorem}\label{thm:func} Let $f:(0,1/2]\to \mathbb{R}$ be a decreasing function. Then $f\in\mathcal{\overline{F}}$ if and only if $f(1/2)=1$ and 
\[
f(\alpha)\, \le\, \frac{1}{\alpha} \qquad\text{for all }\alpha\in(0,1/2).
\]
\end{theorem}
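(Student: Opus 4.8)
The plan is to prove the two directions separately. For the \emph{necessity} direction, suppose $f\in\mathcal{\overline{F}}$, say $f = \lim_{n} f_n$ a.e.\ with $f_n(\alpha) = T^{P_n}(\alpha)/T^{P_n}(1/2)\in\mathcal{F}$. Each $f_n$ satisfies $f_n(1/2) = 1$ trivially, and by Theorem~\ref{thm:main} (taking $\beta=1/2$) we have $f_n(\alpha)\le 1/\alpha$ for all $\alpha\in(0,1/2)$. Both properties pass to the a.e.\ limit: the inequality $f(\alpha)\le 1/\alpha$ holds at every point where the limit exists, and since $f$ is assumed decreasing and the set of convergence is dense, right-continuity-type monotonicity arguments upgrade this to \emph{all} $\alpha\in(0,1/2)$; for the value at $1/2$, note that $f$ decreasing together with $f(\alpha)\ge f_n$-limits near $1/2$ forces $f(1/2)=1$ (one uses that $T^{P_n}(\alpha)\ge T^{P_n}(1/2)$ gives $f_n\ge 1$ everywhere, so $f\ge 1$, and monotonicity plus $f_n(1/2)=1$ pins down the endpoint). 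This direction is routine.

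The substantive direction is \emph{sufficiency}: given a decreasing $f:(0,1/2]\to\mathbb{R}$ with $f(1/2)=1$ and $f(\alpha)\le 1/\alpha$ on $(0,1/2)$, I must exhibit a sequence of irreducible finite Markov chains whose normalised hitting-time functions converge to $f$ a.e. The natural approach is a gluing construction: take the extremal examples promised after Theorem~\ref{thm:main} — for each pair $(\alpha,\beta)$ a chain achieving equality in \eqref{star} — and combine finitely many of them, at carefully chosen scales, into a single reducible-looking but actually irreducible chain (e.g.\ by placing the blocks on disjoint vertex sets and linking them through a common ``hub'' state with tiny transition probabilities, or by a weighted disjoint union made irreducible via $\varepsilon$-perturbations). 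By choosing the number of blocks, their target values $\alpha_i$, and their relative time-scales (the blocks' $T$-values) appropriately, the resulting $T(\alpha)$ will be a step function approximating $f$ from above, subject only to the monotonicity constraint and the envelope $1/\alpha$. Letting the mesh of the $\alpha_i$ tend to $0$ and the scale separations tend to infinity yields a sequence $f_n\in\mathcal{F}$ with $f_n\to f$ at every continuity point of $f$, hence a.e.\ since $f$ is monotone and thus has only countably many discontinuities.

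The main obstacle I anticipate is controlling the hitting-time function of the \emph{composite} chain: when several blocks are glued together, the expected hitting time of a set $A$ meeting measure $\alpha$ is not simply the maximum of the blocks' contributions — the chain may have to traverse the hub, pick up the slow block, etc., and the stationary measure is redistributed across blocks, so achieving $\pi(A)\ge\alpha$ may force $A$ to borrow mass from fast blocks, lowering the hitting time. Getting the scale separations right so that the slow block dominates for each relevant $\alpha$, while the interaction costs (hub traversal, leakage between blocks) are lower-order, is the delicate quantitative heart of the argument; Proposition~\ref{prop:main} should be the right tool for bounding these cross-terms from the correct side. One also needs to verify that the construction never violates $f_n(\alpha)\le 1/\alpha$ — but this is automatic from Theorem~\ref{thm:main}, which is precisely why the envelope in the statement is exactly $1/\alpha$ and nothing smaller.
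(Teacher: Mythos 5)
The necessity direction is essentially fine (modulo the same casual treatment at the endpoint that the paper itself gives), but the sufficiency direction — which is the entire content of the theorem — is not proved in your proposal. You correctly identify the overall strategy: approximate $f$ by step functions and realize each step function (up to a small error) by an explicit chain built around a ``hub'' state, then pass to an a.e.\ limit using that a monotone function has countably many discontinuities. That much matches the paper. But the concrete construction, which is where all the work lies, is left as a sketch of a strategy you explicitly flag as unresolved.

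Specifically, the paper does \emph{not} glue together disjoint blocks (the extremal three-state chains) through a hub with $\varepsilon$-perturbations. It instead builds a single \emph{$L$-shaped chain}: a path $v_{-1},v_0,\dots,v_k$ where every state also has a direct transition back to the hub $v_0$. The point of this structure is Lemma~\ref{lem:T}: because the ``candidate sets'' are the nested tails $\{v_i,\dots,v_k\}$ and any walk to them must pass through $v_0$, one can read off $T(\alpha)$ \emph{exactly} as $\Exp{v_{-1}}{\tau_{v_0}}+\sum_{j\le i}\Exp{v_{j-1}}{\tau_{v_j}}$. Lemma~\ref{lem:hit} then writes down explicit transition probabilities so that these increments equal $\lambda_i N$, realizing any ``hittable'' step function $1+\sum_i\lambda_i\Ind{\alpha\le\alpha_i}$ away from a small error set. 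The difficulties you anticipate for the disjoint-block gluing — stationary mass redistributing across blocks, sets of measure $\alpha$ ``borrowing'' from fast blocks, hub-traversal cross-terms — are real, and it is precisely to avoid them that the paper uses a single path whose relevant sets are nested; you name the obstacle but do not overcome it, and it is not evident that your construction can be made to work. One smaller inaccuracy: with the paper's dyadic discretization $f_n(x)=f(\lceil 2^n x\rceil 2^{-n})$, the step function approximates $f$ from \emph{below} (since $f$ is decreasing), not from above as you assert; and the constructed chain only matches $f_n$ off an $\varepsilon$-error set $\bigcup_i[\alpha_i,\alpha_i+\varepsilon]$ arising from the positive mass placed at $v_{-1}$, which must be shrunk (the paper takes $\varepsilon=2^{-2n}$) so that the union over $n$ of these error sets has measure zero together with the discontinuity set of $f$. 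Without a concrete analogue of Lemma~\ref{lem:hit}, your argument has a genuine gap at its core.
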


\noindent
We prove this by way of a class of chains we call $L$-shaped Markov chains, for which the hitting time functions $T(\alpha)$ can be straightforwardly determined.  We show Theorem~\ref{thm:func} in Section~\ref{sec:examples}.

As it turns out, the constraints given by~\eqref{star} for $0<\alpha < \beta\le 1/2$ are not the only non-trivial constraints on $T(\alpha)$ over the larger domain $\alpha\in(0,1)$.  We demonstrate this in Section~\ref{sec:FRCR}. The shape problem over that larger domain remains an interesting open problem.

\subsection{The connection to mixing times}\label{sec:mixintro}

To put our results into wider context, we now describe the relationship between Theorem~\ref{thm:main} and mixing times. Recall that the (standard) {\em mixing time} of a chain with state space $\Omega$, transition matrix $P$, and stationary distribution $\pi$ is defined as
\[\Tmix^P\equiv \min\left\{t\in \mathbb{N} \,:\, \forall x\in \Omega,\, \forall A\subset \Omega,\, |P^t(x,A) -\pi(A)|\le \frac{1}{4}\right\}.\]
This parameter has various connections to the analysis of MCMC algorithms, to phase transitions in statistical mechanics, and to other pure and applied problems~\cite{LPW}. Aldous~\cite{Aldous_IneqReversible} showed that it is also related to other parameters of the chain, including the following hitting time parameter:
\[\Tprod^P \equiv \max\{\pi(A)\Exp{x}{\tau_A}\,:\,x\in \Omega,\,\emptyset\neq A\subset \Omega\}.\]

\begin{theorem}\label{thm:aldous}
There exists a universal constant $C>0$ such that the following holds. Consider a reversible, irreducible finite Markov chain with transition matrix $P$ that is lazy in the sense that $P_{x\,x}\ge 1/2$ for all $x$ in the state space.
Then
\[\frac{\Tmix^P}{C}\le \Tprod^P\le C\,\Tmix^P.\]
\end{theorem}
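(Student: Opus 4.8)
I would establish the two inequalities separately; only the lower bound on $\Tprod^P$ is subtle.

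\emph{The bound $\Tprod^P\le C\,\Tmix^P$.} This holds for every irreducible finite chain, using neither reversibility nor laziness. Write $t=\Tmix^P$. A standard submultiplicativity argument gives $d(kt):=\max_x\|P^{kt}(x,\cdot)-\pi\|_{\mathrm{TV}}\le 2^{-k}$ for every $k\ge1$; in particular $d(3t)\le 1/8$, so from every state the chain lies in any fixed set $A$ with $\pi(A)\ge 1/2$ with probability at least $3/8$ at time $3t$. A geometric block argument over windows of length $3t$ then gives $\Exp{x}{\tau_A}\le C\,t$ for all such $A$ and all $x$, i.e.\ $T^P(1/2)\le C\,\Tmix^P$. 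Now for any proper nonempty $A$ and any starting state $x$,
\[
\pi(A)\,\Exp{x}{\tau_A}\ \le\ \pi(A)\,T^P(\pi(A))\ \le\ T^P(1/2),
\]
where the first inequality is the definition of $T^P$ and the second is Theorem~\ref{thm:main} (applied with $\beta=1/2$ and $\alpha=\pi(A)$) when $\pi(A)<1/2$, and follows from $\pi(A)\le 1$ together with the fact that $T^P$ is decreasing when $\pi(A)\ge 1/2$. Maximising over $A$ and $x$ yields $\Tprod^P\le T^P(1/2)\le C\,\Tmix^P$.

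\emph{The bound $\Tmix^P\le C\,\Tprod^P$.} Here both hypotheses are essential. First, a set of stationary measure at least $1/4$ realising $T^P(1/4)$ shows $\Tprod^P\ge\tfrac14\,T^P(1/4)$, so it suffices to prove that a lazy reversible chain is mixed after a bounded multiple of $t_H:=T^P(1/4)$, the worst-case \emph{expected} time to hit a set of stationary measure at least $1/4$. This is the implication ``fast hitting of large sets $\Rightarrow$ fast mixing'', which is Aldous's inequality~\cite{Aldous_IneqReversible}; it is also recovered by the Peres--Sousi characterisation of mixing times by hitting times of large sets, and can alternatively be obtained from the equivalence of the standard and C\`esaro mixing times for lazy reversible chains together with the known equivalence of C\`esaro mixing time with $T^P(\beta)$ for $\beta<1/2$. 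The route I would take is to fix a start $x$, set $s$ equal to a large constant times $t_H$, and show that the ``good set'' $G=\{\,y:P^{s}(x,y)\ge c\,\pi(y)\,\}$ of states that $x$ reaches well by time $s$ has $\pi(G)\ge\tfrac14$; one then restarts from the conditional law on the complement of $G$ and contracts the total variation distance to $\pi$ by a constant factor every $O(t_H)$ steps, concluding $d(C\,t_H)\le 1/4$ and hence $\Tmix^P\le C\,t_H\le C\,\Tprod^P$.

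\emph{The main obstacle.} It is the step $\pi(G)\ge\tfrac14$: one must convert purely expectation-level information --- every set of stationary measure at least $1/4$ is hit, from every state, within expected time $t_H$ --- into the distributional statement that $P^{s}(x,\cdot)$ already agrees with $\pi$ up to a constant factor on a set of constant $\pi$-measure. Since expectation bounds are far weaker than tail bounds, one has to first upgrade $\Exp{x}{\tau_A}\le t_H$ to $\Prp{x}{\tau_A\le C\,t_H}\ge\tfrac12$ uniformly over all large $A$, and then link the event that the chain has recently visited the deficient set $\Omega\setminus G$ with its location at the sampling time $s$. It is precisely here that laziness --- forcing the spectrum of $P$ to be nonnegative, so that $s\mapsto P^{2s}(x,x)$ is monotone and $P^{s}(x,\cdot)$ cannot systematically avoid mass for parity reasons --- and reversibility --- so that $P^{s}(x,y)/\pi(y)=P^{s}(y,x)/\pi(x)$, symmetrising the densities and enabling the spectral and strong-stationary-time machinery --- are genuinely needed; without them the conclusion is known to fail.
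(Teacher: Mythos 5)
The paper does not prove Theorem~\ref{thm:aldous}: it is stated as a known result of Aldous~\cite{Aldous_IneqReversible}, with the remark that Aldous worked in continuous time and that the transfer to discrete time is standard (cf.~\cite[Theorem~20.3]{LPW}). There is therefore no in-paper proof to compare your sketch against; the paper treats the theorem as a black box, and your proposal ultimately does the same for the hard direction. Your outline of $\Tprod^P\le C\,\Tmix^P$ is sound, and in fact anticipates a step the paper carries out later inside its proof of Theorem~\ref{thm:large}: reduce to showing $T^P(1/2)\le C\,\Tmix^P$ via a submultiplicativity/block argument, then deduce $\pi(A)\Exp{x}{\tau_A}\le T^P(1/2)$ for all $A$ from Theorem~\ref{thm:main} (for $\pi(A)<1/2$) and monotonicity (for $\pi(A)\ge 1/2$). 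Note, though, that this route leans on Theorem~\ref{thm:main}, a result of the present paper, and so does not reconstruct Aldous's original argument, which predates it.

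For the reverse direction $\Tmix^P\le C\,\Tprod^P$, you correctly identify that this is the substantive implication, that laziness and reversibility are genuinely needed, and that the crux is upgrading expectation-level hitting information to a distributional statement. However, invoking Aldous's inequality or the Peres--Sousi characterisation at exactly that point is circular as a proof of Theorem~\ref{thm:aldous} itself: those are the very results (or near-equivalent reformulations) being asserted, and the ``good set'' sketch you describe is precisely the part you acknowledge you have not supplied. Since the paper likewise defers this direction entirely to~\cite{Aldous_IneqReversible}, this is not a gap relative to the paper's own treatment; it is simply worth being explicit that no new self-contained proof of the hard direction is being offered here.
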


\noindent
We remark that Aldous proved Theorem~\ref{thm:aldous} in continuous time, but there are standard methods to transfer his result to discrete time (cf.~\cite[Theorem 20.3]{LPW}).

Aldous's theorem is typically summed up by saying that $\Tmix^P$ and $\Tprod^P$ are ``equivalent up to universal constants", or simply ``equivalent". A similar equivalence was proved for all irreducible finite Markov chains (not necessarily lazy or reversible), with $\Tmix^P$ replaced by {\em C\`{e}saro mixing time}~\cite{ALW}:
\[\Tces^P\equiv \min\left\{t\in \mathbb{N} \,:\, \forall x\in \Omega,\, \forall A\subset \Omega,\, \left|\frac{1}{t}\sum_{s=0}^{t-1}P^s(x,A) -\pi(A)\right|\le \frac{1}{4}\right\}.\]

A drawback of Theorem~\ref{thm:aldous} and its C\`{e}saro mixing version is that it might seem that the mixing time depends on the hitting times of arbitrarily small sets. On the contrary, it transpires that the maximum hitting times of only sets that are large enough is also equivalent to $\Tmix^P$ and $\Tces^P$ (in the analogous senses). The following was proved independently by Peres and Sousi~\cite{PS} and by the third author~\cite{RIO}.

\begin{theorem}\label{thm:largeish}
For each $\alpha\in (0,1/2)$, there exists a constant $c(\alpha)>0$ such that the following holds. Consider a reversible, irreducible finite Markov chain with transition matrix $P$ that is lazy in the sense that $P_{x\,x}\ge 1/2$ for all $x$ in the state space. Then
\[\frac{\Tmix^P}{c(\alpha)}\le \Thit^P(\alpha)\le c(\alpha)\,\Tmix^P.\]
Moreover, for any irreducible finite Markov chain (not necessarily reversible or lazy),
\[\frac{\Tces^P}{c(\alpha)}\le \Thit^P(\alpha)\le c(\alpha)\,\Tces^P.\]
\end{theorem}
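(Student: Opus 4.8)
The plan is to deduce Theorem~\ref{thm:largeish} from Theorem~\ref{thm:main} together with the two known equivalences between mixing parameters and $\Tprod^P$: Theorem~\ref{thm:aldous} in the reversible lazy case, and the analogous equivalence $\Tces^P\asymp\Tprod^P$ from~\cite{ALW} in the general case. The crux will be a clean two-sided comparison between $T(\alpha)$ and $\Tprod^P$ that holds for \emph{every} irreducible finite chain; laziness and reversibility will enter only through the invocation of Theorem~\ref{thm:aldous}.

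First I would establish $\alpha\,T(\alpha)\le\Tprod^P\le T(\alpha)$. For the left inequality, pick $x\in\Omega$ and $A\subseteq\Omega$ with $\pi(A)\ge\alpha$ attaining the maximum in the definition of $T(\alpha)$; we may assume $A\neq\Omega$ (otherwise $T(\alpha)=0$ and there is nothing to prove), so $A$ is a proper nonempty set and $\Tprod^P\ge\pi(A)\Exp{x}{\tau_A}\ge\alpha\,T(\alpha)$. For the right inequality, take any proper nonempty $A$ and any $x$, put $p=\pi(A)\in(0,1)$, and note $\Exp{x}{\tau_A}\le T(p)$ by definition of $T(p)$. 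If $p\le1/2$ then the second bound in~\eqref{star} of Theorem~\ref{thm:main} (with $\beta=1/2$) gives $p\,T(p)\le T(1/2)$, while if $p>1/2$ then monotonicity of $T$ gives $p\,T(p)\le T(p)\le T(1/2)$. Either way $\pi(A)\Exp{x}{\tau_A}\le T(1/2)$, and taking the supremum over $A$ and $x$ yields $\Tprod^P\le T(1/2)\le T(\alpha)$, the last step since $\alpha<1/2$ and $T$ is decreasing.

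To finish I would plug in the mixing-time equivalences. In the reversible lazy case Theorem~\ref{thm:aldous} gives a universal $C>0$ with $\Tmix^P/C\le\Tprod^P\le C\,\Tmix^P$, so $\Tmix^P\le C\,\Tprod^P\le C\,T(\alpha)$ and $T(\alpha)\le\Tprod^P/\alpha\le(C/\alpha)\,\Tmix^P$; thus $c(\alpha)=C/\alpha$ works. The general statement follows identically, with $\Tces^P/C\le\Tprod^P\le C\,\Tces^P$ from~\cite{ALW} replacing Theorem~\ref{thm:aldous}. The only point where one might expect trouble is controlling the product $\pi(A)\Exp{x}{\tau_A}$ when $A$ has tiny stationary measure; but the tight bound~\eqref{star} handles all measures uniformly, so once Theorem~\ref{thm:main} is available there is no real obstacle — all the substance sits in Theorems~\ref{thm:main} and~\ref{thm:aldous}.
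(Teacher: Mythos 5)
Your proof is correct, but note that the paper does not itself prove Theorem~\ref{thm:largeish}: it is stated as a known result, established independently by Peres--Sousi~\cite{PS} and by Oliveira~\cite{RIO}, and those original proofs did not have Theorem~\ref{thm:main} available (indeed, the paper's narrative is that Theorem~\ref{thm:largeish} together with Theorem~\ref{thm:aldous} yields the weaker Theorem~\ref{thm:RIO}, which Theorem~\ref{thm:main} then improves). What you have done is re-derive Theorem~\ref{thm:largeish} as a corollary of Theorem~\ref{thm:main} and Theorem~\ref{thm:aldous} (and its C\`esaro analogue), via the two-sided comparison $\alpha\,T(\alpha)\le\Tprod^P\le T(1/2)\le T(\alpha)$. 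This is exactly the strategy the paper deploys for the boundary case in its proof of Theorem~\ref{thm:large}: your upper bound $\Tprod^P\le T(1/2)$, via $\pi(A)\Exp{x}{\tau_A}\le\pi(A)\,T(\pi(A))\le T(1/2)$ using~\eqref{star} when $\pi(A)\le 1/2$ and monotonicity of $T$ otherwise, is word-for-word that proof; your lower bound $\alpha\,T(\alpha)\le\Tprod^P$ is the obvious analogue of the paper's $T(1/2)/2\le\Tprod^P$. There is no circularity, since Theorem~\ref{thm:main} rests only on Proposition~\ref{prop:main}, which makes no reference to mixing times. So the proposal is a valid and notably clean alternative to the proofs in~\cite{PS,RIO}, but it is a simplification made possible by the stronger inequality this paper introduces rather than the route those references actually take.
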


\noindent
Note that, together with the C\`esaro mixing time form of Theorem~\ref{thm:aldous}, Theorem~\ref{thm:RIO} now follows.

There is no analogue of Theorem~\ref{thm:largeish} if one allows $\alpha>1/2$: a simple counter-example is given by a random walk on a graph consisting of two large cliques connected by a single edge~\cite{PePersonal}. 
Until now, it was not known whether $\Thit^P(1/2)$ is also equivalent to $\Tmix^P$ and $\Tces^P$. We prove here that this is the case, answering a question of Peres~\cite{Peres_Open}.

\begin{theorem}\label{thm:large}
There exists a universal constant $c>0$ such that the following holds. Consider a reversible, irreducible finite Markov chain with transition matrix $P$ that is lazy in the sense that $P_{x\,x}\ge 1/2$ for all $x$ in the state space. Then
\[\frac{\Tmix^P}{c}\le \Thit^P(1/2)\le c\,\Tmix^P.\]
Moreover, for any irreducible finite Markov chain (not necessarily reversible or lazy),
\[\frac{\Tces^P}{c}\le \Thit^P(1/2)\le c\,\Tces^P.\]
\end{theorem}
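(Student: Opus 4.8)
The plan is to deduce Theorem~\ref{thm:large} from Theorem~\ref{thm:largeish} together with the new pointwise inequality in Theorem~\ref{thm:main}, so that no fresh mixing-time machinery is needed. Observe first that $T(1/2)\le T(\alpha)$ trivially for any fixed $\alpha\in(0,1/2)$, since shrinking the target set can only increase the maximum expected hitting time. Hence the upper bound $\Thit^P(1/2)\le c\,\Tmix^P$ (respectively $\le c\,\Tces^P$) is immediate: pick, say, $\alpha=1/4$, and chain the inequality $\Thit^P(1/2)\le \Thit^P(1/4)$ with the upper bound from Theorem~\ref{thm:largeish}, absorbing $c(1/4)$ into the universal constant $c$. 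This direction is the easy one and requires no reversibility or laziness.

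For the lower bound $\Tmix^P/c\le \Thit^P(1/2)$ (and its C\`esaro analogue) we go in the other direction. Again fix $\alpha=1/4$. By Theorem~\ref{thm:largeish} we have $\Tmix^P\le c(1/4)\,\Thit^P(1/4)$. It now suffices to bound $\Thit^P(1/4)$ by a constant multiple of $\Thit^P(1/2)$, and this is exactly what Theorem~\ref{thm:main} supplies: applying~\eqref{star} with $\alpha=1/4$ and $\beta=1/2$ gives
\[
\Thit^P(1/4)\,\le\,\frac{\Thit^P(1/2)}{1/4}\,=\,4\,\Thit^P(1/2).
\]
Combining, $\Tmix^P\le 4\,c(1/4)\,\Thit^P(1/2)$, so we may take $c=4c(1/4)$ (adjusted to also cover the upper bound above). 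The C\`esaro statement follows verbatim, using the second half of Theorem~\ref{thm:largeish} in place of the first and noting that Theorem~\ref{thm:main} holds for \emph{all} irreducible finite chains, with no reversibility or laziness hypothesis.

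The substantive content is therefore entirely in the reduction $\Thit^P(1/4)\le 4\,\Thit^P(1/2)$ coming from Theorem~\ref{thm:main}; there is no real obstacle at this stage beyond bookkeeping of the constants, and one should state explicitly that the universal $c$ is obtained as $c=\max\{c(1/4),\,4c(1/4)\}=4c(1/4)$. It is worth remarking in the write-up that the choice $\alpha=1/4$ is arbitrary — any fixed $\alpha\in(0,1/2)$ works, yielding $c=\max\{c(\alpha),c(\alpha)/\alpha\}$ — and that the crucial new ingredient over prior work is precisely that Theorem~\ref{thm:main} reaches the boundary case $\beta=1/2$, which is exactly what previous hitting-time/mixing-time equivalences (Theorem~\ref{thm:largeish}) could not do.
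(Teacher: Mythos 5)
Your argument is correct, but it takes a genuinely different route from the paper's. The paper deduces Theorem~\ref{thm:large} directly from Theorem~\ref{thm:aldous} (and its C\`esaro version) by showing $\Tprod^P$ is equivalent to $\Thit^P(1/2)$: the lower bound $\Thit^P(1/2)/2\le\Tprod^P$ is trivial, and the upper bound $\Tprod^P\le\Thit^P(1/2)$ is extracted from Theorem~\ref{thm:main} via $\pi(A)\Exp{x}{\tau_A}\le\pi(A)\,\Thit^P(\pi(A))\le\Thit^P(1/2)$. You instead deduce it from Theorem~\ref{thm:largeish} by showing $\Thit^P(1/4)$ is equivalent to $\Thit^P(1/2)$: monotonicity for one direction and $\Thit^P(1/4)\le 4\,\Thit^P(1/2)$ from~\eqref{star} for the other. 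Both approaches use Theorem~\ref{thm:main} as the crucial new ingredient, and both push an $\alpha<1/2$ statement to $\alpha=1/2$; the paper's route is slightly more direct (one step from the underlying $\Tprod^P$ equivalence), while yours is more modular and stays entirely within the black-boxed Theorem~\ref{thm:largeish} without appealing separately to the C\`esaro form of Aldous's theorem. One small correction in your write-up: the remark that the upper-bound direction ``requires no reversibility or laziness'' is misleading for the $\Tmix^P$ statement, since your invocation of the upper bound $\Thit^P(1/4)\le c(1/4)\,\Tmix^P$ from Theorem~\ref{thm:largeish} does require both hypotheses; only the monotonicity step $\Thit^P(1/2)\le\Thit^P(1/4)$ and the reduction via Theorem~\ref{thm:main} are hypothesis-free, and the C\`esaro statement is the one that needs neither.
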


\begin{proof}
By Theorem~\ref{thm:aldous} and its C\`esaro mixing time version, it suffices to show that $\Tprod^P$ is equivalent to $\Thit^P(1/2)$. But this is simple: on the one hand,
\[\frac{\Thit^P(1/2)}{2}\le \max\{\pi(A)\Exp{x}{\tau_A}\,:\,x\in \Omega,\,A\subset \Omega,\,\pi(A)\ge 1/2\}\le \Tprod^P,\]
whereas, on the other hand, Theorem~\ref{thm:main} implies that
\[\pi(A)\Exp{x}{\tau_A}\le \pi(A)\,\Thit^P(\pi(A))\le \Thit^P(1/2)\]
if $\pi(A)\le 1/2$, and the fact that $\Thit^P(\cdot)$ is monotone decreasing implies the above inequality also holds if $\pi(A)> 1/2$.
\end{proof}

\subsection{Organization}

The remainder of the article is organised as follows. 
In Section~\ref{sec:proofs}, we prove Theorem~\ref{thm:main}.  In Section~\ref{sec:examples}, we show Theorem~\ref{thm:main} is tight by presenting some two- and three-state Markov chains.  We also prove Theorem~\ref{thm:func} in Section~\ref{sec:examples}. Finally, in Section~\ref{sec:FRCR} we consider the behaviour of $T(\alpha)$ over the larger domain $\alpha\in(0,1)$ and make some concluding remarks.

\section{Proofs for Theorem~\ref{thm:main}}\label{sec:proofs}

We begin by showing that Theorem~\ref{thm:main} is an easy consequence of Proposition~\ref{prop:main}.

\begin{proof}[Proof of Theorem~\ref{thm:main}] Consider an irreducible Markov chain with finite state space $\Omega$ and stationary distribution $\pi$.  Fix a state $x\in \Omega$ and a set $A\subseteq \Omega$ with $\pi(A)\ge \alpha$.  We prove that
\[
\Exp{x}{\tau_A}\, \le\, T(\beta)\, +\, \left(\frac{1}{\alpha}-1\right) \cdot T(1-\beta)\, .
\]
Since $x$ and $A$ are arbitrary, this will suffice to prove the theorem.


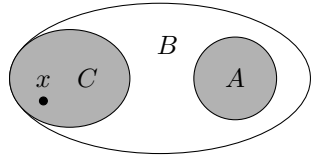
\begin{figure}
\centering
\begin{tikzpicture}[scale=2,draw,nodes={draw,fill=black!0}]
    \draw (0,0) node[ellipse,minimum height=2cm,minimum width=4cm] {};
    \draw (0.05,0.1) node[draw=none,above] {$B$};
    \draw (0.5,0) node[ellipse,minimum height=1.1cm,minimum width=1.1cm,fill=black!30] {$A$};
    \draw (-0.6,0) node[ellipse,minimum height=1.3cm,minimum width=1.6cm,fill=black!30] {\quad\,\,$C$};
    \draw[fill=black] (-0.775,-0.15) circle (0.75pt);
    \draw (-0.775,-0.12) node[draw=none,above,fill=black!30] {$x$};
\end{tikzpicture}
\caption{An illustration of the situation in Theorem~\ref{thm:main}.}
\label{fig:main}
\end{figure}
Define the set $C=C^{\beta}_A$ as follows:
\[
C\, \equiv\, \left\{y\in \Omega: \Ex_y(\tau_A)> \left( \frac{1}{\alpha}-1\right)\cdot T(1-\beta)\right\}\, .
\]
We claim that $\pi(C)< 1-\beta$.  Indeed, if, on the contrary, $\pi(C)$ were at least $1-\beta$, then it would follow that $d^+(A,C)\le T(1-\beta)$ while $d^-(C,A)> (\alpha^{-1}-1) T(1-\beta)$.  This would imply, by Proposition~\ref{prop:main}, that $\pi(A)<\alpha$, a contradiction.  Thus, letting $B\equiv\Omega \setminus C$, we have established that $\pi(B)> \beta$.  Our route from $x$ to $A$ is now clear --- proceed from $x$ to $B$ and then on from $B$ to $A$.  See Figure~\ref{fig:main}.  That is, using the Markovian property of the chain, the expected hitting time of $A$ from $x$ may be bounded by
\[
\Exp{x}{\tau_A}\, \le \, \Exp{x}{\tau_B}\, +\, d^+(B,A) .
\]
Combining the bound $\Exp{x}{\tau_B}\le T(\beta)$ (since $\pi(B)\ge \beta$) with the bound $d^+(B,A) \le (\alpha^{-1}-1)\cdot T(1-\beta)$ (since $B$ is the complement of $C$), we obtain
\[
\Exp{x}{\tau_A}\, \le \, T(\beta)\, +\, \left(\frac{1}{\alpha}-1\right)\cdot T(1-\beta)\, ,
\]
as required.
\end{proof}

All that remains is to prove Proposition~\ref{prop:main}.  As remarked in the introduction, we have replaced our original proof by a shorter and more elegant argument suggested by Peres and Sousi~\cite{PSpersonal}.  Our original proof, which may be obtained at {\tt http://arxiv.org/abs/1209.0039v1}, relied on the ergodic theorem for irreducible Markov chains combined with a martingale concentration inequality.

\begin{proof}[Proof of Proposition~\ref{prop:main}~\cite{PSpersonal}]
Denote the Markov chain by $X$.  Our approach is to define a distribution $\mu$ on $A$ and a distribution $\nu$ on $C$ such that 
\begin{equation}\label{eq:dist}
\pi(A) \Exp{\nu}{\tau_A} \leq (1-\pi(A)) \Exp{\mu}{\tau_C}\, .
\end{equation}
Doing so will complete a proof of the proposition.  Indeed, re-arranging inequality~\eqref{eq:dist}, we obtain
\[
\pi(A)\, \le \frac{\Exp{\mu}{\tau_C}}{\Exp{\mu}{\tau_C}+\Exp{\nu}{\tau_A}}\, \le \, \frac{d^+(A,C)}{d^+(A,C)+d^-(C,A)}\, ,
\]
as required.

We now define the distributions $\mu$ and $\nu$ to satisfy inequality~\eqref{eq:dist}.  Consider an auxiliary Markov chain on $A$ defined by the following transitions: for each $x,y\in A$, let $Q_{xy}$ be the probability that, started from $x$, the first state of $A$ hit by $X$ after time $\tau_C$ is $y$ (i.e.~that $y$ is the first state of $A$ hit after the original chain has reached $C$ from $x$).  Let $\mu$ denote a stationary distribution of this new chain, and let $\nu$ be the hitting distribution on $C$ when the original chain is started from $\mu$, i.e.~$\nu(y)=\Prp{\mu}{X_{\tau_C}=y}$ for each $y\in C$.  

It remains to prove that~\eqref{eq:dist} holds for this choice of $\mu$ and $\nu$.  First observe that, started from the distribution $\mu$, the expected time the chain $X$ spends in $A$ before it reaches $C$ and returns to $A$ is given by $\Exp{\mu}{\tau} \pi(A)$, where $\tau$ denotes the number of steps in such a cycle (from $A$ to $C$ then back to $A$).
This observation is not difficult to verify, but we have included a proof below in Lemma~\ref{lem:splitlemma} of the appendix.
Next, since all visits to $A$ occur before the chain reaches $C$, we have that $\Exp{\mu}{\tau} \pi(A) \le \Exp{\mu}{\tau_C}$. Finally, inequality~\eqref{eq:dist} follows since $\Exp{\mu}{\tau}=\Exp{\mu}{\tau_C}+\Exp{\nu}{\tau_A}$.
\end{proof}

\section{Examples and a proof of Theorem~\ref{thm:func}}\label{sec:examples}

This section is devoted to exhibiting classes of Markov chains which demonstrate that Theorem~\ref{thm:main} is tight, in a few different senses.
\medskip

We first show that equality in~\eqref{star} is attained. For each $0<\alpha<\beta\le 1/2$ we exhibit an irreducible three-state chain with $T(\alpha)=T(\beta)/\alpha$ and hence $T(\alpha)=T(\beta)+(\alpha^{-1}-1)T(\beta) \ge T(\beta)+(\alpha^{-1}-1)T(1-\beta)$, as required.  Consider the three-state chain with transition matrix
\begin{align*}
\begin{pmatrix}
0 \hspace{0.2cm}& 1 \hspace{0.2cm}& 0 \vspace{0.3cm}\\
\frac{\eps}{(1-\alpha-\eps)} \hspace{0.2cm} & 1-\frac{\alpha+\eps}{(1-\alpha-\eps)}\hspace{0.2cm} & \frac{\alpha}{(1-\alpha-\eps)}\vspace{0.3cm} \\
0\hspace{0.2cm} & 1\hspace{0.2cm} & 0
\end{pmatrix}\, ,
\end{align*}
where $0<\eps<\beta-\alpha$.  We note immediately that $(\eps,1-\alpha-\eps, \alpha)$ is the stationary distribution of the chain.  It can be easily checked that $T(\beta)=1$ and $T(\alpha)=1/\alpha$.
\medskip

We next show that the condition $\beta\le 1/2$ in Theorem~\ref{thm:main} is necessary by writing down an irreducible finite chain with $T(\beta)=0$ and $T(\alpha)$ arbitrarily large when $\beta > 1/2$.
Supposing $\beta > 1/2$, let $N$ be an arbitrarily large number and let $\gamma$ be such that $\max\{\alpha,1/2\} < \gamma < \beta$.
Consider the two-state Markov chain with transition matrix
\begin{align*}
\begin{pmatrix}
1-\frac{1}{\gamma N} & \frac{1}{\gamma N} \\
\frac{1}{(1-\gamma)N}   & 1-\frac{1}{(1-\gamma)N} 
\end{pmatrix}.
\end{align*}
The stationary distribution of the chain is $(\gamma, 1-\gamma)$.  It is an exercise to verify that $T(\beta)=0$ and $T(\alpha)\ge(1-\gamma)N$, as desired.
\medskip

We now turn to the proof of Theorem~\ref{thm:func}.  We must prove that each decreasing function $f:(0,1/2]\to \mathbb{R}$ satisfying 
\[
f(\alpha)\, \le\, \frac{1}{\alpha}\, \qquad\text{for all }\alpha\in(0,1/2)
\]
may be obtained as the a.e.~pointwise limit of a sequence of functions $f_1,f_2,\dots$ in $\mathcal{F}$ (i.e.~functions $f_i$ such that $f_i(\alpha) = T^{P_i}(\alpha)/T^{P_i}(1/2)$ for some irreducible finite Markov chain with transition matrix $P_i$).  We first prove this for a certain class of step functions.  Then we consider general functions as limits of these step functions in order to obtain the theorem.

The class of decreasing step functions $f:(0,1/2]\to \mathbb{R}$ we consider are those that may be written in the form
\[f(\alpha)= 1+\sum_{i=1}^{k}\lambda_i\cdot \Ind{\alpha\le \alpha_i},\]
where the $\lambda_i$ and $\alpha_i$ are positive reals satisfying 
\begin{equation}\label{eq:le}
\sum_{j=1}^{i}\lambda_j\le \alpha_i^{-1}-1 \qquad\text{for each }i\in\{1,\dots ,k\} ,
\end{equation}
and $0<\alpha_k<\dots <\alpha_1<1/2$.  We call such a step function \emph{hittable}. We note that if $f$ is a hittable step function then $f(1/2)=1$ and $f(\alpha) \le 1/\alpha$ for all $\alpha\in(0,1/2)$.

Given a hittable step function  $f(\alpha)= 1+\sum_{i=1}^{k}\lambda_i\cdot \Ind{\alpha\le \alpha_i}$, we define the \emph{$\eps$-error set} for $f$ to be the set
\[\Err{f}{\eps} \equiv \bigcup_{i=0}^k[\alpha_i,\alpha_i+\eps],\]
where we interpret $\alpha_0=0$. 

\begin{lemma}\label{lem:hit} Let $f:(0,1/2]\to \mathbb{R}$ be a hittable step function and $\eps>0$.  Then there exists an irreducible finite Markov chain such that $f(\alpha)=T(\alpha)/T(1/2)$ for all $\alpha\in (0,1/2]\setminus \Err{f}{\eps}$.
\end{lemma}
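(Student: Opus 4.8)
The plan is to realise $f$, up to the normalisation by $T(1/2)$ and up to the error set, by an explicit nearest-neighbour (birth--death) walk on a path; these are the ``$L$-shaped'' chains referred to in the theorem. Take a path with vertices $u_0,u_1,\dots,u_{k+1}$, write $t=u_0$, $h=u_1$ and $v_i=u_{i+1}$ for $1\le i\le k$, and set $\alpha_{k+1}=0$. Fix small positive reals $\eps',\delta_1,\dots,\delta_k$ with $\delta_i+\eps'<\eps$ and $\alpha_i+\delta_i>\alpha_{i+1}+\delta_{i+1}$ for every $i$, and $\alpha_1+\delta_1+\eps'<1/2$ (we may assume $\eps$ itself is small, as shrinking $\eps$ only strengthens the lemma). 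Arrange the stationary distribution to be $\pi(t)=\eps'$, $\pi(v_i)=(\alpha_i+\delta_i)-(\alpha_{i+1}+\delta_{i+1})$ and $\pi(h)=1-\alpha_1-\delta_1-\eps'$; the salient features are that $\pi(h)>1/2$ and that the tail sets $A_i:=\{v_i,\dots,v_k\}$ satisfy $\pi(A_i)=\alpha_i+\delta_i$, so that they descend through the thresholds $\alpha_1,\dots,\alpha_k$ from just above. Writing $p_j,q_j$ for the probabilities of stepping right, left at $u_j$ (so $q_0=p_{k+1}=0$ and detailed balance gives $\pi(u_j)p_j=\pi(u_{j+1})q_{j+1}$) and $h_j:=\Exp{u_j}{\tau_{u_{j+1}}}$, one has the recursion $h_jp_j=1+q_jh_{j-1}$; I would choose the transition probabilities so that $h_0=c$, $h_j=c\lambda_j$ for $1\le j\le k$, and $\sum_{j=1}^{k}\Exp{u_{j+1}}{\tau_{u_j}}=c$, where $c>0$ is a scaling constant solved for at the end. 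A birth--death chain on $k+2$ vertices has ample freedom (edge conductances and self-loop weights) to realise these values, and the only place the hypothesis \eqref{eq:le} is used is in checking that the prescribed values correspond to genuine non-negative probabilities with $p_j+q_j\le1$.

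Next I would identify the extremal sets. Since $\pi(h)>1/2$, every set $A$ with $\pi(A)\ge1/2$ contains $h$, so $\tau_A\le\tau_h$ and $\Exp{x}{\tau_A}\le\Exp{x}{\tau_h}$ for every $x$; on the path $\Exp{x}{\tau_h}$ is maximised at the far endpoint $v_k$, so $T(1/2)=\Exp{v_k}{\tau_h}=\sum_{j=1}^{k}\Exp{u_{j+1}}{\tau_{u_j}}=c$ (the value $\Exp{t}{\tau_h}=h_0$ equals, hence does not exceed, this). Now fix $\alpha\in(0,1/2]\setminus\Err{f}{\eps}$; then $\alpha$ lies either in $(\alpha_1+\eps,1/2]$ or in $(\alpha_{i+1}+\eps,\alpha_i)$ for a unique $i\in\{1,\dots,k\}$, and on that interval $f(\alpha)=1+\lambda_1+\dots+\lambda_i$ (the first case being $i=0$). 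The lower bound $T(\alpha)\ge cf(\alpha)$ is immediate: $\pi(A_i)=\alpha_i+\delta_i\ge\alpha$ while $\Exp{t}{\tau_{A_i}}=\Exp{t}{\tau_{v_i}}=h_0+h_1+\dots+h_i=c(1+\lambda_1+\dots+\lambda_i)$.

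For the matching upper bound I would show $\Exp{x}{\tau_A}\le c(1+\lambda_1+\dots+\lambda_i)$ for every $A$ with $\pi(A)\ge\alpha$ and every $x$, using two facts. First, monotonicity of hitting times on a path: from $t$ one has $\tau_A=\tau_{u_m}$ where $u_m$ is the leftmost vertex of $A$, so $\Exp{t}{\tau_A}=h_0+\dots+h_{m-1}$ when $t\notin A$, whereas any ``leftward'' hitting time $\Exp{u_b}{\tau_{u_a}}$ with $a\ge1$ is bounded by $\sum_{j=1}^{k}\Exp{u_{j+1}}{\tau_{u_j}}=c$. Second, a measure count: if $\pi(A)\ge\alpha$ then the leftmost vertex of $A$ other than $t$ lies in $\{h,v_1,\dots,v_i\}$, since otherwise $\pi(A)\le\pi(t)+\pi(\{v_{i+1},\dots,v_k\})=\eps'+\alpha_{i+1}+\delta_{i+1}<\alpha_{i+1}+\eps<\alpha$, using $\delta_{i+1}+\eps'<\eps$. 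Combining: when $t\notin A$ its leftmost vertex $u_m$ has $m-1\le i$, so a start to the left of $A$ costs at most $h_0+\dots+h_i$ and a start within the span of $A$ reaches $A$ by a leftward move at cost at most $c\le c(1+\lambda_1+\dots+\lambda_i)$; when $t\in A$ but $h\notin A$ one removes $t$ and re-runs the count on $\{v_1,\dots,v_k\}$, and when $h\in A$ the bound $\Exp{x}{\tau_A}\le\Exp{x}{\tau_h}\le c$ finishes it. Hence $T(\alpha)=cf(\alpha)$, and dividing by $T(1/2)=c$ gives $T(\alpha)/T(1/2)=f(\alpha)$ off $\Err{f}{\eps}$.

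The hard part will be this upper bound: the measure count together with the case analysis over the combinatorial type of $A$ (especially the case $t\in A$, $h\notin A$), and, separately, the routine but essential verification that the prescribed segment hitting times $h_0=c$, $h_j=c\lambda_j$ and $\sum_j\Exp{u_{j+1}}{\tau_{u_j}}=c$ are realised by a bona fide irreducible chain --- which is where \eqref{eq:le} enters, the point being to check positivity of $p_j,q_j$ after substituting into the recursion and solving for the edge weights and $c$.
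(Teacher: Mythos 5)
Your plan has a genuine gap: a pure birth--death chain on a path does not have the degrees of freedom you are counting on. For such a reversible chain, once the stationary distribution and the rightward segment hitting times $h_j=\Exp{u_j}{\tau_{u_{j+1}}}$ are fixed, the leftward hitting times are forced by detailed balance via
\[
\Exp{u_{j+1}}{\tau_{u_j}} \;=\; h_j\cdot\frac{1-\Pi_j}{\Pi_j}, \qquad \Pi_j=\sum_{l\le j}\pi(u_l),
\]
since both expectations equal $\pi(u_j)p_j\cdot(\text{something})$ with the same conductance $\pi(u_j)p_j=\pi(u_{j+1})q_{j+1}$. Hence the quantity $S:=\sum_{j=1}^{k}\Exp{u_{j+1}}{\tau_{u_j}}$ that you want to equal $c$ is \emph{not} an independently adjustable target: with your $\pi$ and $h_j=c\lambda_j$ it is
\[
S \;=\; c\sum_{j=1}^{k}\lambda_j\cdot\frac{\alpha_j+\delta_j}{1-\alpha_j-\delta_j},
\]
and there is no reason this is $\le c$. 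Concretely, take $k=2$, $\alpha_1=0.4$, $\alpha_2=0.2$, $\lambda_1=1.5$, $\lambda_2=2.5$ (so \eqref{eq:le} holds with equality at both indices). Then with $\delta_j\to 0$ one gets $S/c\to 1.5\cdot\tfrac{0.4}{0.6}+2.5\cdot\tfrac{0.2}{0.8}=1.625>1$. In that case the farthest point from $h$ is $v_k$ with $\Exp{v_k}{\tau_h}=S>c=\Exp{t}{\tau_h}$, so $T(1/2)=S$, and the ratio $T(\alpha)/T(1/2)$ at $\alpha$ just below $\alpha_i$ is $c(1+\lambda_1+\dots+\lambda_i)/S\neq f(\alpha)$. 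Your parameter count (``edge conductances and self-loop weights'') is off by one precisely because reversibility ties the leftward times to the rightward ones.

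This is why the paper does not use a plain path. The $L$-shaped chain has, in addition to nearest-neighbour moves, a direct transition $P_{i\,0}=1/N$ from every $v_i$ back to $v_0$. These jumps make the chain \emph{non-reversible} and give a separate handle on $\Exp{v_i}{\tau_{v_0}}$ (indeed $\Exp{v_i}{\tau_{v_0}}\le N$ immediately from the jump, with equality forced only at $v_{-1}$), so $T(1/2)=N$ can be pinned down while the rightward times $\Exp{v_{i-1}}{\tau_{v_i}}=\lambda_i N$ are chosen freely. The rest of your argument (the extremal-set analysis on a path, the measure count showing the leftmost non-$t$ vertex of $A$ lies in $\{h,v_1,\dots,v_i\}$, the case split on whether $t,h\in A$) parallels the paper's Lemma~\ref{lem:T} and is fine; the missing ingredient is the non-reversible shortcut structure. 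If you want to repair the proof without abandoning the path entirely, you must add these jump-to-$v_0$ transitions (or some equivalent device) so that the leftward return time is no longer determined by $\pi$ and the $h_j$.
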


The examples of Markov chains we shall use in the proof of the lemma are all of the same type. 
An \emph{$L$-shaped Markov chain} is a chain whose state space may be labelled $\Omega=\{v_{-1},v_0,v_1,\dots ,v_k\}$ in such a way that the transition matrix of the chain has non-zero entries only at $P_{i\,(i-1)},P_{i\,i},P_{(i-1)\,i},P_{i\,0}$ for $i\in\{0,1,\dots,k\}$. 
Note that $v_0$ is the only state that may be reached directly from a non-adjacent state.  Thus, with the exception of jumps to $v_0$, all transitions are to a neighbour in the sequence $v_{-1},v_0,v_1,\dots ,v_k$.
See Figure~\ref{fig:L}.
In proving Lemma~\ref{lem:hit}, we need only consider $L$-shaped chains. Indeed, it is because the hitting times of such Markov chains are relatively easy to determine that they are suitable for our purposes.  
The following lemma, though somewhat specialised, is exactly what we shall require in our proof of Lemma~\ref{lem:hit}.

\begin{figure}
\centering
\begin{tikzpicture}[->,>=stealth',node distance=1.5cm,scale=2,draw,nodes={circle,draw,fill=black, inner sep=1.5pt}]

  \node (-1) {};
  \node (0) [below of=-1] {};
  \node (1) [right of=0] {};
  \node (2) [right of=1] {};
  \node (3) [right of=2] {};
  \node (4) [right of=3] {};
  \node (5) [right of=4] {};
  \node (6) [right of=5] {};
  \node (7) [right of=6] {};

    \draw (-1) node[fill=none,draw=none,left] {$v_{-1}$};
    \draw (0) node[fill=none,draw=none,below] {$v_0$};
    \draw (1) node[fill=none,draw=none,below] {$v_1$};
    \draw (2) node[fill=none,draw=none,below] {$v_2$};
    \draw (3) node[fill=none,draw=none,below] {$v_3$};
    \draw (4) node[fill=none,draw=none,below] {$v_4$};
    \draw (5) node[fill=none,draw=none,below] {$v_5$};
    \draw (6) node[fill=none,draw=none,below] {$v_6$};
    \draw (7) node[fill=none,draw=none,below] {$v_7$};

\path[every node/.style={font=\sffamily\small}]
(-1) edge [bend right] node {} (0)
     edge [loop right] node {} (-1)
(0)  edge [bend right] node {} (-1)
     edge [loop above] node {} (0)
     edge [bend right] node {} (1)
(1)  edge [bend right] node {} (0)
     edge [loop left] node {} (1)
     edge [bend right] node {} (2)
(2)  edge [bend right] node {} (0)
     edge [bend right] node {} (1)
     edge [loop left] node {} (2)
     edge [bend right] node {} (3)
(3)  edge [bend right] node {} (0)
     edge [bend right] node {} (2)
     edge [loop left] node {} (3)
     edge [bend right] node {} (4)
(4)  edge [bend right] node {} (0)
     edge [bend right] node {} (3)
     edge [loop left] node {} (4)
     edge [bend right] node {} (5)
(5)  edge [bend right] node {} (0)
     edge [bend right] node {} (4)
     edge [loop left] node {} (5)
     edge [bend right] node {} (6)
(6)  edge [bend right] node {} (0)
     edge [bend right] node {} (5)
     edge [loop left] node {} (6)
     edge [bend right] node {} (7)
(7)  edge [bend right] node {} (0)
     edge [bend right] node {} (6)
     edge [loop left] node {} (7);
\end{tikzpicture}
\caption{A depiction of an $L$-shaped Markov chain.}
\label{fig:L}
\end{figure}
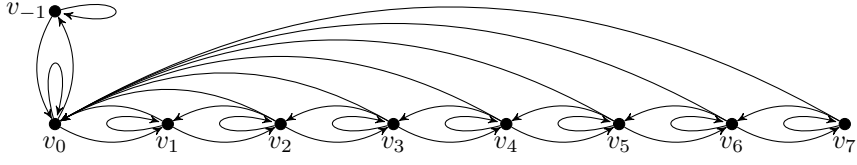

\begin{lemma}\label{lem:T}
Suppose we are given an $L$-shaped Markov chain on state space $\Omega=\{v_{-1},v_0,v_1,\dots ,v_k\}$ with the property that $\Exp{v_j}{\tau_{v_0}}$ is maximised at $j=-1$. If $i\in \{0,\dots ,k\}$ and $\alpha\in (0,1)$ satisfy
\begin{equation}
\label{eq:alphai}
\pi(\{v_{i+1},\dots ,v_k\})+\pi(v_{-1})<\alpha \le \pi(\{v_i,\dots ,v_k\}),
\end{equation}
then 
\[
T(\alpha)=\Exp{v_{-1}}{\tau_{\{v_i,\dots,v_k\}}}=\Exp{v_{-1}}{\tau_{v_i}}\, .
\]
\end{lemma}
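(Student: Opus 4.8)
The plan is to prove the two displayed equalities separately. The second one is a purely structural fact about $L$-shaped chains, while the first, $T(\alpha)=\Exp{v_{-1}}{\tau_{v_i}}$, will follow from a matching lower and upper bound. The structural feature I would use throughout is that in an $L$-shaped chain the only way the index of the current state can increase is by a $+1$ step to the next state along the path (a jump, which always goes to $v_0$, never increases the index of a state of index $\ge 0$). Consequently, started from $v_{-1}$, the chain makes its first visits to $v_0,v_1,v_2,\dots$ in that order, so its first entry into $\{v_i,\dots,v_k\}$ necessarily occurs at $v_i$; that is, $\tau_{\{v_i,\dots,v_k\}}=\tau_{v_i}$ holds $\mathbb{P}_{v_{-1}}$-almost surely, which is the second equality. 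The same feature together with the strong Markov property (and finiteness of expected hitting times, from irreducibility) gives, for $0\le j\le m\le i$, the identities $\Exp{v_{-1}}{\tau_{v_m}}=\Exp{v_{-1}}{\tau_{v_j}}+\Exp{v_j}{\tau_{v_m}}$ and $\Exp{v_{-1}}{\tau_{v_i}}=\Exp{v_{-1}}{\tau_{v_m}}+\Exp{v_m}{\tau_{v_i}}$, and hence
\[
\Exp{v_j}{\tau_{v_m}}\ \le\ \Exp{v_{-1}}{\tau_{v_m}}\ \le\ \Exp{v_{-1}}{\tau_{v_i}}\qquad\text{for }0\le j\le m\le i.
\]

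For the first equality, the bound $T(\alpha)\ge\Exp{v_{-1}}{\tau_{v_i}}$ is immediate: take $A=\{v_i,\dots,v_k\}$, which has $\pi(A)\ge\alpha$ by the right-hand inequality of~\eqref{eq:alphai}, and use the second equality. For the reverse bound, fix an arbitrary $x\in\Omega$ and an arbitrary set $A$ with $\pi(A)\ge\alpha$. The left-hand inequality of~\eqref{eq:alphai} gives $\pi(\Omega\setminus\{v_0,\dots,v_i\})=\pi(v_{-1})+\pi(\{v_{i+1},\dots,v_k\})<\alpha\le\pi(A)$, so $A$ contains some $v_m$ with $0\le m\le i$; fix such an $m$. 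Since $v_m\in A$, $\tau_A\le\tau_{v_m}$ pointwise, so it suffices to prove the claim
\[
\Exp{x}{\tau_{v_m}}\ \le\ \Exp{v_{-1}}{\tau_{v_m}}\qquad\text{for every }x\in\Omega,
\]
for then $\Exp{x}{\tau_A}\le\Exp{x}{\tau_{v_m}}\le\Exp{v_{-1}}{\tau_{v_m}}\le\Exp{v_{-1}}{\tau_{v_i}}$ by the display above, and $x$, $A$ were arbitrary.

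To prove this claim, write $x=v_j$. If $-1\le j\le m$, it is contained in the first display (and is trivial when $j=m$). If $j>m$ and $m=0$, it is exactly the hypothesis that $\Exp{v_j}{\tau_{v_0}}$ is maximised at $j=-1$. If $j>m$ and $m\ge1$, I would decompose at the first hitting time $\sigma:=\tau_{\{v_0,v_m\}}$ of $\{v_0,v_m\}$ via the strong Markov property:
\[
\Exp{v_j}{\tau_{v_m}}=\Exp{v_j}{\sigma}+\Prp{v_j}{X_\sigma=v_0}\cdot\Exp{v_0}{\tau_{v_m}}\ \le\ \Exp{v_j}{\tau_{v_0}}+\Exp{v_0}{\tau_{v_m}}\ \le\ \Exp{v_{-1}}{\tau_{v_0}}+\Exp{v_0}{\tau_{v_m}}=\Exp{v_{-1}}{\tau_{v_m}},
\]
using $\Exp{v_j}{\sigma}\le\Exp{v_j}{\tau_{v_0}}$ and $\Prp{v_j}{X_\sigma=v_0}\le1$ for the first inequality, the hypothesis for the second, and the identity $\Exp{v_{-1}}{\tau_{v_m}}=\Exp{v_{-1}}{\tau_{v_0}}+\Exp{v_0}{\tau_{v_m}}$ (the $j=0$ case from the first paragraph) for the last equality. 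This establishes the claim, and with it the lemma.

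Almost all of this is forced by the path structure and the strong Markov property; the one step I expect to be the real obstacle is the case $j>m$ of the claim. For a state ``beyond'' $v_m$ the telescoping used when $j\le m$ breaks down, since a jump to $v_0$ lets the chain reach $v_m$ without ever descending through $v_{m+1}$, so there is no clean recursion along the path. The decomposition at $\tau_{\{v_0,v_m\}}$ is designed precisely to handle this: it peels off the (harmless) excursion to $\{v_0,v_m\}$ and then appeals to the lemma's single hypothesis, namely that $v_{-1}$ is the worst starting point for hitting $v_0$.
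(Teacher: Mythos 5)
Your proof is correct and follows essentially the same route as the paper's: reduce to hitting a single state $v_m\in A$ with $0\le m\le i$, route through $v_0$, and invoke the hypothesis that $\Exp{v_j}{\tau_{v_0}}$ is maximised at $j=-1$. The only stylistic difference is that you split into the cases $j\le m$ and $j>m$; the paper simply applies the triangle bound $\Exp{v_j}{\tau_{v_m}}\le\Exp{v_j}{\tau_{v_0}}+\Exp{v_0}{\tau_{v_m}}$ uniformly for every $j$ (it remains a valid, if not tight, upper bound when $j\le m$), so the case split is correct but not needed.
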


\begin{proof} The second equality is obvious since, starting from $v_{-1}$, the chain first arrives in the set $\{v_i,\dots, v_k\}$ at $v_i$.  It is also immediate that $T(\alpha)\ge \Exp{v_{-1}}{\tau_{\{v_i,\dots ,v_k\}}}$, by the definition of $T(\alpha)$ and the assumption that $\pi(\{v_i,\dots ,v_k\}) \ge \alpha$.  Thus all that remains is to prove, for any state $v_j$ and set $A$ with $\pi(A)\ge \alpha$, that $\Exp{v_j}{\tau_A}\le \Exp{v_{-1}}{\tau_{\{v_i,\dots ,v_k\}}}$.

Fix $j\in \{-1,\dots ,k\}$ and a set $A$ with $\pi(A)\ge \alpha$.  Let $i'$ be the minimal non-negative integer for which $v_{i'}\in A$.  The condition on $\alpha$ implies that $i'\le i$.  Now (using the property that $\Exp{v_j}{\tau_{v_0}}$ is maximised at $j=-1$, and the fact that $i'\le i$) we have that
\[
\Exp{v_j}{\tau_A}\le \Exp{v_j}{\tau_{v_0}}+\Exp{v_0}{\tau_{v_{i'}}}\le \Exp{v_{-1}}{\tau_{v_0}}+\Exp{v_0}{\tau_{v_i}}\, .
\]
Since any path from $v_{-1}$ to $v_i$ necessarily passes through $v_0$, the final expression is equal to $\Exp{v_{-1}}{\tau_{\{v_i,\dots ,v_k\}}}$, completing the proof.\end{proof}

The intuition of the above lemma (at least for our intended application) is that if $v_{-1}$ has a very small measure ($\eps$ say) then for almost all values of $\alpha$ (except on a set of measure at most $k\eps$) we know how to express $T(\alpha)$ directly as a hitting time.  This is central to our proof of Lemma~\ref{lem:hit}.

\begin{proof}[Proof of Lemma~\ref{lem:hit}] We shall prove the following assertion: for every hittable step function $f(\alpha)= 1+\sum_{i=1}^{k}\lambda_i\cdot \Ind{\alpha\le \alpha_i}$, every $0<\eps< 1/2-\alpha_1$ and every sufficiently large natural number $N$, there exists an $L$-shaped Markov chain with transition matrix $P$, state space $\Omega=\{v_{-1},v_0,v_1,\dots ,v_k\}$ and stationary measure $\pi$ satisfying 
\begin{itemize}
\item[(i)] $\pi(v_{-1})=\eps$, $\pi(v_0)=1-\alpha_1-\eps$, and $\pi(\{v_i,\dots ,v_k\})=\alpha_i$ for each $i\in\{1,\dots ,k\}$,
\item[(ii)] $\Exp{v_{i}}{\tau_{v_0}}\le N$ for each $i\in\{-1,0,1,\dots,k\}$ with equality if $i = -1$, and
\item[(iii)] $\Exp{v_{i-1}}{\tau_{v_i}}=\lambda_i N$ for each $i\in\{1,\dots,k\}$. 
\end{itemize}

From this assertion Lemma~\ref{lem:hit} easily follows.  Indeed, since $\pi(v_0)=1-\alpha_1 -\eps>1/2$ we have that $T(1/2)$ is precisely the maximum expected hitting time of $v_0$,
and it follows immediately from~(ii) that $T(1/2)= N$.  Given $\alpha\in (0,1/2]\setminus \Err{f}{\eps}$, we shall determine $T(\alpha)$ using Lemma~\ref{lem:T} and condition~(iii).  
In order to apply Lemma~\ref{lem:T}, first notice that condition~(ii) ensures that $\Exp{v_{j}}{\tau_{v_0}}$ is maximised at $j=-1$.  Let $i\in\{1,\dots,k\}$ be smallest such that $\alpha \le \alpha_i$.  Using (i) and the fact that $\alpha\in (0,1/2]\setminus \Err{f}{\eps}$, it is straightforward to verify that~\eqref{eq:alphai} holds in the statement of Lemma~\ref{lem:T}.  Thus, applying Lemma~\ref{lem:T} and using condition~(iii), we have
\begin{align*}
T(\alpha)=\Exp{v_{-1}}{\tau_{v_i}}=\Exp{v_{-1}}{\tau_{v_0}}+\sum_{j=1}^{i}\Exp{v_{j-1}}{\tau_{v_j}}=\left( 1+\sum_{j=1}^{i}\lambda_j\right )N = f(\alpha)T(1/2),
\end{align*}
as required.

We now prove the above assertion by stating explicitly the entries of the transition matrix $P$.  First, we set
\begin{align*}
P_{-1\,0}=\frac{1}{N}&,\quad P_{0\,-1}=\frac{\eps}{(1-\alpha_1-\eps)N},\quad P_{-1\,-1}=1-P_{-1\,0}\\ 
P_{0\,1}=\frac{1-\alpha_1}{(1-\alpha_1-\eps)\lambda_1N}&,\quad P_{1\,0}=\frac{1-\alpha_1-\lambda_1\alpha_2}{(\alpha_1-\alpha_2)\lambda_1N}\quad\text{and}\quad P_{0\,0}=1-P_{0\,-1}-P_{0\,1}. 
\end{align*}
Next, for each $i\in \{2,\dots,k\}$, we set
\begin{align*}
P_{(i-1)\,i}=\frac{1-\alpha_i(1+\sum_{j=1}^{i-1}\lambda_j)}{(\alpha_{i-1}-\alpha_{i})\lambda_iN},\quad P_{i\,(i-1)}=\frac{1-\alpha_i(1+\sum_{j=1}^{i}\lambda_j)}{(\alpha_{i}-\alpha_{i+1})\lambda_iN},\\
P_{i\, 0}=\frac{1}{N}\quad\text{and}\quad P_{i\,i}=1-P_{i\,0}-P_{i\,(i-1)}-P_{i\,(i+1)}.
\end{align*}
Last, we set $P_{1\,1}=1-P_{1\,0}-P_{1\,2}$.  It is routine to verify that each entry in the transition matrix $P$ of our Markov chain is in $[0,1]$ using~\eqref{eq:le}, $0 < \eps < 1/2-\alpha_1$, $0<\alpha_k<\cdots<\alpha_2<\alpha_1$, and a large enough choice of $N$.

Some straightforward calculations confirm that the resulting stationary distribution $\pi$ satisfies condition~(i) above.
Condition~(ii) follows easily from checking that $P_{i\,0} \ge 1/N$ (so that $\Exp{v_i}{\tau_{v_0}} \le N$) for all $i$ and that $\Exp{v_{-1}}{\tau_{v_0}} = N$.
To verify condition~(iii) for each $i\in\{1,\dots,k\}$, we compute the expected hitting time from $v_{i-1}$ to $v_i$ by considering the chain started at $v_{i-1}$ and conditioning on the first step.  We use induction on $i$.  For the base case ($i=1$), we have that
\begin{align*}
\Exp{v_0}{\tau_{v_1}}
& = 1 + P_{0\,0}\Exp{v_0}{\tau_{v_1}} + P_{0\,-1}\Exp{v_{-1}}{\tau_{v_1}}\\
& = 1 + P_{0\,0}\Exp{v_0}{\tau_{v_1}} + P_{0\,-1}(N+\Exp{v_0}{\tau_{v_1}}),
\end{align*}
which implies (after substitution and rearrangement) that $\Exp{v_0}{\tau_{v_1}} = \lambda_1N$.  Next,
\begin{align*}
\Exp{v_1}{\tau_{v_2}}
& = 1 + P_{1\,1}\Exp{v_1}{\tau_{v_2}} + P_{1\, 0}\Exp{v_0}{\tau_{v_2}}\\
& = 1 + P_{1\,1}\Exp{v_1}{\tau_{v_2}} + P_{1\, 0}(\lambda_1N+\Exp{v_1}{\tau_{v_2}}),
\end{align*}
which implies that $\Exp{v_1}{\tau_{v_2}} = \lambda_2N$.  Finally, for $i\in\{3,\dots,k\}$, we have
\begin{align*}
\Exp{v_{i-1}}{\tau_{v_i}}
& = 1 + P_{(i-1)\,(i-2)}\Exp{v_{i-2}}{\tau_{v_{i}}} + P_{(i-1)\,(i-1)}\Exp{v_{i-1}}{\tau_{v_{i}}} + P_{(i-1)\,0}\Exp{v_{0}}{\tau_{v_{i}}}\\
& = 1 + P_{(i-1)\,(i-2)}(\lambda_{i-1}N+\Exp{v_{i-1}}{\tau_{v_i}}) + P_{(i-1)\,(i-1)}\Exp{v_{i-1}}{\tau_{v_{i}}}\\
& \qquad\qquad+ P_{(i-1)\,0}\left(\sum_{j=1}^{i-1}\lambda_jN + \Exp{v_{i-1}}{\tau_{v_i}}\right),
\end{align*}
where the second equality uses the inductive assumption that $\Exp{v_{j-1}}{\tau_{v_j}} = \lambda_jN$ for $j \in \{1,\dots,i-1\}$.
This implies that $\Exp{v_{i-1}}{\tau_{v_i}} = \lambda_iN$, as desired.
\end{proof}

It is now straightforward to deduce Theorem~\ref{thm:func}.

\begin{proof}[Proof of Theorem~\ref{thm:func}] The only if part is an immediate consequence of Theorem~\ref{thm:main}.  Now, fix a decreasing function $f:(0,1/2]\to \mathbb{R}$ with $f(1/2)=1$ that satisfies
$f(\alpha) \le \alpha^{-1}$ for all $\alpha\in(0,1/2)$. Denote by $D=D(f) \subseteq (0,1/2]$ the set of discontinuity points of $f$.  Since $f$ is decreasing, the set $D$ is countable by Froda's theorem\footnote{See {\tt http://en.wikipedia.org/wiki/Froda's\_theorem}.}.
For each positive integer $n$, define the function $f_n:(0,1/2]\to \mathbb{R}$ by
\[
f_n(x)=f(\lceil 2^n x \rceil 2^{-n})\, .
\]
One easily notes that $f_n(x)\to f(x)$ for all $x\in(0,1/2]\setminus D$.

We observe that each $f_n$ is a hittable step function, because it can be written
\begin{align*}
1+\sum_{i=1}^{2^{n-1}-1}  \lambda_i  \Ind{\alpha\le\alpha_i},
\end{align*}
where $\alpha_i=1/2-i2^{-n}$, and $\lambda_i = f(\alpha_i)-f(\alpha_{i-1})$. Condition~\eqref{eq:le} is easily seen to hold since 
\[
1 + \sum_{j=1}^i \lambda_j = 1 + f(\alpha_i ) - f(\alpha_0 ) = f(\alpha_i ) \leq \alpha_i^{-1}. 
\]

To prove the theorem we must find a sequence of functions $g_n\in \mathcal{F}$ such that $g_n(x)\to f(x)$ except on a set of measure zero.  By Lemma~\ref{lem:hit} there exists for each $n$ a function $g_n\in \mathcal{F}$ such that $g_n(x)=f_n(x)$ for all $x\in (0,1/2]\setminus \Err{f_n}{2^{-2n}}$,
where
\begin{align*}
\Err{f_n}{2^{-2n}}=\bigcup_{i=0}^{2^{n-1}-1}\left[\frac{i}{2^n},\frac{i}{2^n}+\frac{1}{2^{2n}}\right].
\end{align*}
We now prove that $g_n(x)\to f(x)$ as $n\to \infty$ for each $x\in (0,1/2]\setminus (D\cup D')$, where $D'$ denotes the set of points that lie in infinitely many intervals of $\Err{f_n}{2^{-2n}}$.  Since $D\cup D'$ has measure zero, this will complete the proof of the theorem.  

To this end, fix $x\in (0,1/2]\setminus (D\cup D')$.  Since $x\not\in D$, we have that $f_n(x)\to f(x)$ as $n\to \infty$.  Furthermore, since $x\not\in D'$, there exists $n_0$ such that 
\[
x\not\in \bigcup_{n\ge n_0}\bigcup_{i=0}^{2^{n-1}-1} \left[\frac{i}{2^n},\frac{i}{2^n}+\frac{1}{2^{2n}}\right],
\] 
and so $g_n(x)=f_n(x)$ for all $n\ge n_0$.  Thus $\lim_{n\to \infty}g_n(x)=\lim_{n\to \infty} f_n(x)=f(x)$, completing the proof of the theorem.\end{proof}

\section{One further result and concluding remarks}\label{sec:FRCR}

For $0 < \alpha < \beta \le 1/2$ we proved the tight inequality $T(\alpha)\le T(\beta)+(\alpha^{-1}-1)T(1-\beta)$ relating hitting times of large enough sets in irreducible finite Markov chains.  Furthermore, we demonstrated that this is the only non-trivial restriction on $T(\alpha)$ as a function over $\alpha\in(0,1/2]$, in the sense made rigorous in Theorem~\ref{thm:func}.  

The most obvious remaining question then is whether there are other non-trivial inequalities relating the values of $T(\alpha)$ for all $\alpha\in (0,1)$.  In one further result, we demonstrate that $T:(0,1)\to \mathbb{R}$ is further constrained.   However, determining the set of all inequalities that hold among the values of $T(\alpha)$ for all $\alpha\in (0,1)$ and thereby giving a characterisation in the spirit of Theorem~\ref{thm:func} of the possible behaviour of $T:(0,1)\to \mathbb{R}$ remains an interesting open problem.

To demonstrate that $T:(0,1)\to\mathbb{R}$ is further constrained it suffices to give a single example of such an additional restriction, which is as follows.

\begin{proposition}\label{prop:another}
Given an irreducible finite Markov chain, suppose $T(0.01)=99.9 T(0.02)$.  Then $T(0.99)\ge 0.1 T(0.02)$.
\end{proposition}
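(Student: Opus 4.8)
The plan is to combine Theorem~\ref{thm:main} with Proposition~\ref{prop:main}. Write $t=T(0.02)$; we may assume $t>0$, since otherwise one checks (using that $\Omega$ has at least two states) that $T(0.01)=0$ too and the claim is trivial. Applying Theorem~\ref{thm:main} with $\alpha=0.01$, $\beta=0.02$ gives $T(0.01)\le T(0.02)+99\,T(0.98)$, so the hypothesis $T(0.01)=99.9\,t$ forces $T(0.98)\ge\tfrac{98.9}{99}t>0.9\,t$. Fix $B\subseteq\Omega$ with $\pi(B)\ge 0.98$ and $x\in\Omega$ realising $\Exp{x}{\tau_B}=T(0.98)$; necessarily $x\notin B$. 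Put $S\equiv\Omega\setminus B$, so $x\in S$, $\pi(S)\le 0.02$, and $\Exp{x}{\tau_{\Omega\setminus S}}=\Exp{x}{\tau_B}>0.9\,t$. If $\pi(S)\le 0.01$, i.e.\ $\pi(B)\ge 0.99$, we are already done, since then $T(0.99)\ge\Exp{x}{\tau_B}>0.9\,t>0.1\,t$. So from now on suppose $\pi(S)\in(0.01,0.02]$.

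In that case the idea is to enlarge $B$ to stationary measure at least $0.99$ by adjoining a suitable subset $E\subseteq S$ with $x\notin E$ and $\pi(E)\ge\pi(S)-0.01$, so that $\pi(B\cup E)=1-\pi(S)+\pi(E)\ge 0.99$, chosen so that $\Exp{x}{\tau_{B\cup E}}$ is still at least $0.1\,t$. Since every state of $S$ reaches $B$ in expected time at most $\max_{w\in S}\Exp{w}{\tau_B}\le T(\pi(B))\le T(0.98)\le t$, writing $\tau_{B\cup E}=\min(\tau_B,\tau_E)$ and splitting according to whether $\tau_B\le\tau_E$ or $\tau_E<\tau_B$ (and applying the strong Markov property at $\tau_E$ on the second event) gives
\[
2\,\Exp{x}{\tau_{B\cup E}}\ \ge\ \Exp{x}{\tau_B}\ -\ t\cdot\Prp{x}{\tau_E<\tau_B}\ \ge\ \bigl(0.9-\Prp{x}{\tau_E<\tau_B}\bigr)\,t.
\]
Hence it suffices to find such an $E$ with $\Prp{x}{\tau_E<\tau_B}\le 0.7$ (i.e.\ the chain from $x$ reaches $B$ before $E$ with probability at least $0.3$), for then $T(0.99)\ge\Exp{x}{\tau_{B\cup E}}\ge 0.1\,t$, as required.

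The crux — and the step I expect to be the real work — is the existence of such an $E$. The favourable situation is when the set $G\subseteq S$ of states the chain started at $x$ can reach before time $\tau_B$ has $\pi(G)\le 0.01$: then $E=S\setminus G$ works trivially, since the chain never meets $E$ before $\tau_B$, so $\Prp{x}{\tau_E<\tau_B}=0$ and indeed $T(0.99)\ge\Exp{x}{\tau_B}>0.9\,t$. The obstacle is the complementary case $\pi(G)\in(0.01,0.02]$, in which $S$ behaves like a well‑mixed ``sticky'' region of stationary measure strictly between $0.01$ and $0.02$ that $x$ is slow to leave. Proposition~\ref{prop:main} alone does not suffice here: applied to $B$ and a candidate subset $F\subseteq S$ it only yields $T(\pi(F))\ge\tfrac{\pi(B)}{1-\pi(B)}\lambda$, with $\lambda$ a lower bound for the escape time of $F$, and since $\pi(B)<0.99$ while $\lambda$ cannot exceed the escape time of $S$ (at most $t$), this falls just short of certifying $\pi(F)\le 0.01$. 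To close the gap one must re‑open the argument behind Theorem~\ref{thm:main} and use the \emph{full} hypothesis $T(0.01)=99.9\,T(0.02)$ rather than merely its corollary $T(0.98)\ge\tfrac{98.9}{99}T(0.02)$: any set $A$ witnessing $T(0.01)$ has $\pi(A)<0.02$ yet sits at expected distance exceeding $99\,t$ from every state outside a set of small stationary measure, and this rigidity should let one carve out of $S$ a subset of stationary measure at most $0.01$ that the chain is still slow to exit starting from $x$.
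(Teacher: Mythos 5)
Your proposal does not constitute a proof: you explicitly leave the crucial step unresolved, and the direction you gesture at is not enough to close the gap. After the correct preliminary reduction via Theorem~\ref{thm:main} (giving $T(0.98)\ge\tfrac{98.9}{99}t$) and the reduction to finding $E\subseteq S$ with $x\notin E$, $\pi(E)\ge\pi(S)-0.01$ and $\Prp{x}{\tau_E<\tau_B}$ bounded away from $1$, you concede that you cannot produce such an $E$ in the difficult case $\pi(G)\in(0.01,0.02]$, and you correctly observe that Proposition~\ref{prop:main} applied to $B$ and candidate $F\subseteq S$ ``falls just short.'' The proposal ends with the statement that ``this rigidity should let one carve out \dots a subset of stationary measure at most $0.01$'' without a construction. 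That is the heart of the matter, not a detail.

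There is also a structural difficulty in your setup that the paper's argument avoids. You fix $B$ as an arbitrary witness for $T(0.98)$ and the associated maximising start state $x$; but there is no a priori relation between this pair $(x,B)$ and the set $A$ (with $\pi(A)\ge 0.01$) that witnesses the extreme hypothesis $T(0.01)=99.9\,t$. Your sketch of the ``favourable case'' presupposes control of the set $G$ of states the chain from $x$ can visit before $\tau_B$, but nothing in your choice of $(x,B)$ forces $G$ to be small, and nothing ties $G$ to $A$. The paper instead works entirely with the extremal pair $(x,A)$ realising $T(0.01)$ and defines all auxiliary sets as level sets of the potential $y\mapsto\Exp{y}{\tau_A}$: $B'=\{\Exp{\cdot}{\tau_A}\le 99T\}$, $B=\{\Exp{\cdot}{\tau_A}\in[98.9T,99T]\}$, $C=\{\Exp{\cdot}{\tau_A}\ge 99.8T\}$. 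This choice makes the bounds $d^\pm$ appearing in Proposition~\ref{prop:main} and Lemma~\ref{lem:ABC} fall out of the definitions. Most importantly, the paper needs a genuinely new ingredient beyond Proposition~\ref{prop:main}, namely Lemma~\ref{lem:ABC}, whose proof uses Markov's inequality over a window of length $10T$ plus a restart argument to bound $d^+(B,C)<14T$; this is exactly the quantitative leverage that you recognise Proposition~\ref{prop:main} alone cannot supply. Without an analogue of that lemma (or some other mechanism to bound the measure of the ``deep'' set $C$ by $0.01$), your argument cannot be completed along the lines you indicate.
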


\noindent
We note that this restriction is indeed outside of the class of restrictions imposed by Theorem~\ref{thm:main}.  Writing $T$ for $T(0.02)$, first one can check using Lemma~\ref{lem:hit} that there exist Markov chains satisfying the equality $T(0.01)=99.9T$.  Furthermore, assuming this equality, the application of Theorem~\ref{thm:main} gives that $T(0.01)\le T+99T(0.98)$.  Although this inequality demands that $T(0.98)$ be very close to $T$ --- specifically, $T(0.98)\in [(98.9/99)T,T]$ --- there is no restriction on $T(0.99)$.  Thus Proposition~\ref{prop:another} does indeed represent an additional restriction.  We require the following lemma.

\begin{lemma}\label{lem:ABC}
Given an irreducible Markov chain with finite state space $\Omega$, let $A,B,C\subseteq \Omega$ and
$T$ be a real number such that
\begin{align*}
d^+(\Omega,B)\le T, \quad d^+(\Omega,A\cup C)\le T, \quad d^+(\Omega,A)\le 99.9 T\quad\text{and}\quad d^-(B,A) \ge 98.9T.
\end{align*}
Then 
$d^+(B,C) < 14 T$.
\end{lemma}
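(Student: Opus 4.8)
The plan is a short renewal (restart) argument. Throughout fix a state $x\in B$ and write $\sigma:=\tau_{A\cup C}$ for the first time the chain enters $A\cup C$; since $A\cup C=A\sqcup(C\setminus A)$, at time $\sigma$ the chain lies either in $A$ or in $C\setminus A$. Everything hinges on first showing that landing in $A$ is very unlikely: for every $x\in B$,
\[
\Prp{x}{X_\sigma\in A}\,\le\,\frac{2}{99.9}.
\]
To prove this, decompose $\Exp{x}{\tau_A}$ according to $X_\sigma$. On $\{X_\sigma\in A\}$ one has $\tau_A=\sigma$; on $\{X_\sigma\in C\setminus A\}$ the hitting time $\tau_A$ equals $\sigma$ plus a further time whose conditional expectation given $X_\sigma$ is at most $d^+(\Omega,A)\le 99.9\,T$. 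Using $\Exp{x}{\sigma}\le d^+(\Omega,A\cup C)\le T$ and the strong Markov property at $\sigma$, this gives
\[
\Exp{x}{\tau_A}\,\le\,T+\bigl(1-\Prp{x}{X_\sigma\in A}\bigr)\cdot 99.9\,T ,
\]
and comparing with $\Exp{x}{\tau_A}\ge d^-(B,A)\ge 98.9\,T$ and rearranging yields the claim.

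Now set $M:=d^+(B,C)$ (finite by irreducibility; we may assume $C\neq\emptyset$ and $T>0$, the remaining cases being degenerate) and $p:=\max_{x\in B}\Prp{x}{X_\sigma\in A}\le 2/99.9$. Fix $x\in B$ and let $\rho$ be the further time after $\sigma$ needed to reach $B$, so $\sigma+\rho$ is a stopping time at which the chain lies in $B$. If $X_\sigma\in C$ then $\tau_C=\sigma$; if instead $X_\sigma\in A\setminus C$, bound $\tau_C$ by $\sigma+\rho$ plus the hitting time of $C$ from the resulting state of $B$. Taking expectations, applying the strong Markov property at $\sigma$ and at $\sigma+\rho$, and invoking $d^+(\Omega,A\cup C)\le T$, $d^+(\Omega,B)\le T$ and the definition of $M$ gives
\[
\Exp{x}{\tau_C}\,\le\,\Exp{x}{\sigma}+\Prp{x}{X_\sigma\in A\setminus C}\,(T+M)\,\le\,T+p\,(T+M).
\]
Maximising the left-hand side over $x\in B$ yields $M\le T+p(T+M)$, hence $M\le\frac{1+p}{1-p}\,T\le\frac{101.9}{97.9}\,T<14\,T$, as required.

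The only delicate points are bookkeeping: verifying that $\sigma$ and $\sigma+\rho$ are genuine stopping times, that the events $\{X_\sigma\in A\}$ and $\{X_\sigma\in A\setminus C\}$ are $\mathcal F_\sigma$-measurable (hence $\mathcal F_{\sigma+\rho}$-measurable) so the strong Markov property applies on each piece, and that the decompositions of $\tau_A$ and $\tau_C$ hold pathwise. I do not expect a serious obstacle here: all the content sits in the elementary first step, where the large separation $d^-(B,A)\ge 98.9\,T$ set against the small ``diameter'' $d^+(\Omega,A\cup C)\le T$ is turned into a tiny failure probability $p$, after which a one-line renewal estimate finishes the proof. (This argument in fact gives $d^+(B,C)\le\frac{101.9}{97.9}\,T$, so the constant $14$ leaves ample room.)
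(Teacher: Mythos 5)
Your proof is correct, and it takes a genuinely different route from the paper's. The paper runs the chain for a fixed horizon of $10T$ steps, uses Markov's inequality (from $d^+(\Omega,A\cup C)\le T$) to see the chain has hit $A\cup C$ with probability at least $9/10$ by then, and separately exploits the gap between $d^-(B,A)\ge 98.9T$ and the horizon $10T$ to show the probability of having hit $A$ by time $10T$ is below $1/8$; a renewal estimate at time $10T$ then gives $d^+(B,C)\le 41T/3 < 14T$. You dispense with the artificial time horizon entirely: decomposing at the stopping time $\sigma=\tau_{A\cup C}$ itself and using the Wald-type bound $\Exp{x}{\tau_A}\le\Exp{x}{\sigma}+\Prp{x}{X_\sigma\notin A}\,d^+(\Omega,A)$ converts the hypotheses directly into $\Prp{x}{X_\sigma\in A}\le 2/99.9$, and the subsequent restart through $B$ is exactly the paper's renewal step but executed at $\sigma$ rather than at the deterministic time $10T$. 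The payoff is both conceptual cleanliness and a much sharper constant: $d^+(B,C)\le\frac{101.9}{97.9}\,T<1.05\,T$ versus the paper's $14T$ (this in turn would give a stronger contradiction in the proof of Proposition~\ref{prop:another}, though $14T$ already suffices there). Your strong Markov bookkeeping at $\sigma$ and $\sigma+\rho$, and the handling of $A\cap C$ via $A\sqcup(C\setminus A)$, are all sound; the caveat $T>0$, $C\neq\emptyset$ is legitimate and is needed equally for the paper's own argument.
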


\begin{proof}
Let $y \in B$.
Consider running the chain for $10T$ steps and denote by $p_y$ the probability $\Prp{y}{\tau_A\le 10T}$.  The assumptions on the hitting time of $A$ imply that
\[
98.9T\le \Exp{y}{\tau_A}\le 10T + (1-p_y) 99.9T.
\]
Thus $p_y< 0.111< 1/8$.
On the other hand, $\Prp{y}{\tau_{A\cup C}\le 10T}\ge 9/10$ by Markov's inequality, and so $\Prp{y}{\tau_{C}\le 10T}\ge 9/10-1/8 > 3/4$.

We may now bound $d^+(B,C)$ as follows.  Note that, in the event that the chain does not hit $C$ after $10T$ steps, the expected remaining time to hit $C$ may be bounded by $T$ (an upper bound on expected time to return to $B$) plus $d^+(B,C)$ (an upper bound on the expected time to hit $C$ from an element of $B$).  Thus
\[
d^+(B,C) \le 10T\,+ \, \frac{1}{4}(T+d^+(B,C))\, .
\]
It follows that $d^+(B,C)\le 41T/3< 14T$, as required.\end{proof}

We now prove Proposition~\ref{prop:another}.

\begin{proof}[Proof of Proposition~\ref{prop:another}] Let us write $T$ for $T(0.02)$.  Since $T(0.01)=99.9T$ there exists a set $A\subseteq \Omega$ with $\pi(A)\ge 0.01$ and a state $x\in \Omega$ such that $\Exp{x}{\tau_A}=99.9T$.  Define sets 
\[
B'\equiv\{y\in \Omega:\Exp{y}{\tau_A}\le 99T\} \qquad\text{and}\qquad B\equiv\{y\in \Omega:\Exp{y}{\tau_A}\in [98.9T,99T]\}\,.
\]
Arguing as in the proof of Theorem~\ref{thm:main}, one obtains that $\pi(B')\ge 0.98$ --- specifically, if this were not the case, then one would have $d^+(A,\Omega\setminus B')\le T$ and $d^-(\Omega\setminus B',A)>99T$, which contradicts the bound of $\pi(A)\ge 0.01$ using Proposition~\ref{prop:main}.  We now claim that $\pi(B)\ge 0.96$.  Indeed, if on the contrary $\pi(B'\setminus B)$ were greater than $0.02$, then one would obtain $\Exp{x}{\tau_A}< \Exp{x}{\tau_{B'\setminus B}}+98.9T\le 99.9T$, a contradiction.


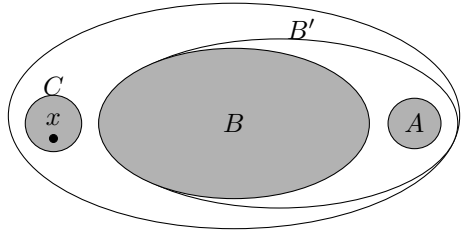
\begin{figure}
\centering
\begin{tikzpicture}[scale=2,draw,nodes={draw,fill=black!0}]
    \draw (0,0) node[ellipse,minimum height=3cm,minimum width=6cm] {};
    \draw (0.3,-0.05) node[ellipse,minimum height=2.25cm,minimum width=4.75cm] {};
    \draw (0.45,0.45) node[fill=none,draw=none,above] {$B'$};
    \draw (0,-0.05) node[ellipse,minimum height=2cm,minimum width=3.6cm,fill=black!30] {$B$};
    \draw (1.2,-0.05) node[ellipse,minimum height=0cm,minimum width=0cm,fill=black!30] {$A$};
    \draw (-1.2,-0.05) node[ellipse,minimum height=0.75cm,minimum width=0.75cm,fill=black!30] {};
    \draw (-1.2,0.08) node[fill=none,draw=none,above] {$C$};
    \draw[fill=black] (-1.2,-0.15) circle (0.75pt);
    \draw (-1.2,-0.12) node[fill=none,draw=none,above] {$x$};
\end{tikzpicture}
\caption{An illustration of the situation in Proposition~\ref{prop:another}.}
\label{fig:another}
\end{figure}
Now, define
\[
C\equiv\{y\in \Omega:\Exp{y}{\tau_A}\ge 99.8T\}.
\]
See Figure~\ref{fig:another}.
We claim that $\pi(C)\le0.01$.  Indeed, if $\pi(C)$ were greater than $0.01$, then the set $A\cup C$ would have stationary measure at least $0.02$, so that $d^+(\Omega,A\cup C) \le T$.  And we would then obtain from Lemma~\ref{lem:ABC} that $d^+(B,C) < 14T$.  On the other hand, $d^-(C,B)\ge 0.8T$ (otherwise, $d^-(C,A)\le d^-(C,B)+d^+(B,A)<0.8T+99T$, which contradicts the definition of $C$). And so, by Proposition~\ref{prop:main}, $\pi(B) < 14T/14.8T =  70/74 < 0.96$, a contradiction.  Thus we have $\pi(\Omega\setminus C)\ge 0.99$ and the inequality $99.9T=\Exp{x}{\tau_A}\le \Exp{x}{\tau_{\Omega\setminus C}}+99.8T$ implies that $\Exp{x}{\tau_{\Omega\setminus C}}\ge 0.1T$.  Therefore $T(0.99)\ge 0.1T$, as required.
\end{proof}

\subsection*{Acknowledgements}

We are grateful to Yuval Peres and Perla Sousi for their kind permission to include their proof of Proposition~\ref{prop:main}.  We also thank the anonymous referee for helpful comments and corrections.

\bibliographystyle{amsplain}
\bibliography{hithithit}

\appendix

\section{A lemma for the proof of Proposition~\ref{prop:main}}

For completeness, we include here a proof of an assertion used in the proof of Proposition~\ref{prop:main}.  This is similar to the proof of Proposition~1.14 of~\cite{LPW}.

\begin{lemma}\label{lem:splitlemma}
Suppose we are given an irreducible Markov chain $X$ with finite state space $\Omega$ and stationary distribution $\pi$.
Let $\mu$ be a distribution on $\Omega$ and $\tau$ be an a.s.~positive stopping time such that 
$\Prp{\mu}{X_\tau = x} = \mu(x)$ for any $x\in\Omega$.
Then for any $S\subset \Omega$ the expected time $X$ spends in $S$ up to time $\tau$ starting from $\mu$ equals $\pi(S)\,\Exp{\mu}{\tau}$.
\end{lemma}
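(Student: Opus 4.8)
The plan is to prove this by the standard ``cycle decomposition'' argument that underlies Proposition~1.14 of~\cite{LPW}. The key idea is to use $\tau$ to chop the trajectory of $X$ into i.i.d.-like blocks and to invoke the characterisation of $\pi$ as the unique stationary distribution. First I would set $G_S(\mu) \equiv \Exp{\mu}{\sum_{t=0}^{\tau-1}\Ind{X_t\in S}}$, the expected time spent in $S$ before $\tau$ starting from $\mu$, and note that the hypothesis $\Prp{\mu}{X_\tau = x} = \mu(x)$ says precisely that $\mu$ is a stationary distribution for the ``snapshot chain'' obtained by sampling $X$ at times $0,\tau_1 := \tau, \tau_2, \dots$, where $\tau_{k+1}$ is the analogously-defined stopping time for the shifted chain. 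By the strong Markov property, the successive blocks $(X_{\tau_k},\dots,X_{\tau_{k+1}-1})$ then form a stationary sequence, and $G_S(\mu)$ is the expected contribution of a single block to the occupation time of $S$.

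Next I would argue that $\nu(S) \equiv G_S(\mu)/\Exp{\mu}{\tau}$ defines a probability distribution on $\Omega$ (finiteness of $\Exp{\mu}{\tau}$ follows from irreducibility on a finite state space, e.g.\ from the fact that hitting times have finite expectation, and positivity/normalisation are immediate from $S=\Omega$). The crucial step is to verify that $\nu$ is $P$-stationary, i.e.\ $\nu P = \nu$. This is the point where the defining property of $\tau$ is used: running the block one extra step shifts the occupation-time bookkeeping, and the hypothesis $\Prp{\mu}{X_\tau = x}=\mu(x)$ guarantees that the ``boundary terms'' at the two ends of a block cancel, so that for each $y\in\Omega$, $\sum_{x} G_x(\mu) P(x,y) = G_y(\mu)$. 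Concretely, $\sum_x G_x(\mu)P(x,y) = \Exp{\mu}{\sum_{t=0}^{\tau-1}\Ind{X_{t+1}=y}} = \Exp{\mu}{\sum_{t=1}^{\tau}\Ind{X_t=y}} = G_y(\mu) - \Ind{}\{\,\text{initial term}\,\} + \Ind{}\{\,\text{term at }\tau\,\}$, and the two correction terms have equal expectation precisely because $X_\tau$ and $X_0$ have the same law $\mu$ under $\Prp{\mu}{\cdot}$. Once $\nu P = \nu$ is established, uniqueness of the stationary distribution for an irreducible finite chain forces $\nu = \pi$, which is exactly the claim $G_S(\mu) = \pi(S)\,\Exp{\mu}{\tau}$.

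I expect the main obstacle to be the careful handling of the index-shift in the stationarity computation — in particular making rigorous that the expected value of $\Ind{X_0 = y}$ minus $\Ind{X_\tau = y}$ vanishes, which requires $\tau$ to be an a.s.-finite stopping time (so the sums are finite and Fubini/linearity applies) together with the hypothesis that $X_\tau \sim \mu$. A secondary, more routine point is justifying $\Exp{\mu}{\tau} < \infty$; I would dispatch this quickly by noting that on a finite irreducible chain all hitting times, hence $\tau$ (which dominates nothing in general but here is assumed a.s.\ positive and, being a genuine stopping time with $X_\tau\sim\mu$, can be taken to have finite mean — or one simply observes the lemma is only applied in a setting where this is clear). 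With those two technical points in hand, the rest is bookkeeping.
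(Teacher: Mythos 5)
Your proposal is correct and follows essentially the same route as the paper's proof: define the occupation measure (your $G_x(\mu)$, the paper's $\widetilde{\pi}(x)$), show it is invariant under $P$ via the index shift $\sum_{t=0}^{\tau-1}\Ind{X_{t+1}=y}=\sum_{t=1}^{\tau}\Ind{X_t=y}$, and cancel the boundary terms using $\tau>0$ a.s.\ together with $X_\tau\sim\mu\sim X_0$, then invoke uniqueness of the stationary distribution and normalise by $\Exp{\mu}{\tau}$. The paper normalises at the end rather than upfront, but the substance of the argument is identical.
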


\begin{proof}
For each $x\in\Omega$, define $\widetilde{\pi}(x)$ as the expected time $X$ spends at $x$ up to time $\tau$ when started from $\mu$, i.e.
\[
\widetilde{\pi}(x)\equiv \Exp{\mu}{\sum_{t= 0}^\infty\Ind{\{X_t=x,t<\tau\}}}.
\]
We shall prove that $(\widetilde{\pi}\,P)(x) = \widetilde{\pi}(x)$ for all $x\in \Omega$, which implies that $\widetilde{\pi}$ is a multiple of the (unique) stationary distribution $\pi$. The lemma then follows from 
$\sum_{x\in \Omega}\widetilde{\pi}(x) = \Exp{\mu}{\tau}$, so that $\widetilde{\pi}(x) = \pi(x)\,\Exp{\mu}{\tau}$ for all $x \in\Omega$.

Let us now fix $x\in\Omega$ and compute $(\widetilde{\pi}\,P)(x)$:
$$(\widetilde{\pi}\,P)(x) = \sum_{y\in \Omega}\Exp{\mu}{\sum_{t= 0}^\infty\Ind{\{X_t=y,t<\tau\}}}P_{yx} = \sum_{t= 0}^\infty\sum_{y\in \Omega}\Prp{\mu}{X_t=y,t<\tau}\,P_{yx},$$
since all terms are non-negative. By the Markov property, each term of the double sum equals $\Prp{\mu}{X_t=y,X_{t+1}=x,t<\tau}$, and resolving the inner sum gives
\[
(\widetilde{\pi}\,P)(x) = \sum_{t= 0}^\infty\Prp{\mu}{X_{t+1}=x,t<\tau}.
\]
Now we split each term in the last summation into two parts as follows:
\begin{align*}
\Prp{\mu}{X_{t+1}=x,t<\tau}
& = \Prp{\mu}{X_{t+1}=x,t+1<\tau} + \Prp{\mu}{X_{t+1}=x,t+1=\tau}.
\end{align*}
Summing the first part over $t$ gives
\begin{align*}
\sum_{t= 0}^\infty\Prp{\mu}{X_{t+1}=x,t+1<\tau}
& = \widetilde{\pi}(x)-\Prp{\mu}{X_{0}=x,\tau>0} \\
& = \widetilde{\pi}(x)-\Prp{\mu}{X_{0}=x} = \widetilde{\pi}(x) - \mu(x),
\end{align*}
where the second equality uses the fact that $\tau>0$ a.s. Summing the second part,
\[
\sum_{t= 0}^\infty\Prp{\mu}{X_{t+1}=x,t+1=\tau} = \Prp{\mu}{X_{\tau} = x} = \mu(x),
\]
where we have used $\tau>0$ a.s.~for the first equality and the assumption on $\mu$ for the second. It follows that
\(
(\widetilde{\pi}\,P)(x) = \widetilde{\pi}(x) - \mu(x) + \mu(x) = \widetilde{\pi}(x),
\)
as desired.
\end{proof}

\end{document}